\documentclass[11pt,a4paper]{article}
\usepackage[T1]{fontenc}
\usepackage{lmodern}
\usepackage[margin=27mm]{geometry}
\usepackage{amsmath,amssymb,amsthm,mathtools}
\usepackage{microtype}
\usepackage{xcolor}
\usepackage{titlesec}
\usepackage[colorlinks=true,linkcolor=blue!45!black,citecolor=blue!45!black,urlcolor=blue!45!black]{hyperref}
\newtheorem{lemma}{Lemma}
\newtheorem{prop}{Proposition}

\newtheorem{corollary}[lemma]{Corollary}
\newtheorem{theorem}[lemma]{Theorem}
\theoremstyle{definition}
\newtheorem{defn}{Definition}
\theoremstyle{remark}
\newtheorem{rem}{Remark}
\DeclareMathOperator{\Ree}{Re}

\newcommand{\BH}{\mathcal B(H)}
\newcommand{\HC}{H_{\mathbb C}}
\newcommand{\BHC}{\mathcal B(\HC)}
\newcommand{\norm}[1]{\left\lVert#1\right\rVert}

\hypersetup{pdftitle={Unique operator space }}
\title{Unique operator space structures on a family of two dimensional Banach spaces}
\author{Samya Kumar Ray}
\begin{document}
\maketitle

\begin{abstract}
For $0<\theta<\pi$, let $K_\theta=\{e^{is}:|s|\le\theta\}$ and
$E_\theta=\operatorname{span}_{\mathbb C}\{1,z\}\subset C(K_\theta)$.
We prove that every contractive linear map from $E_\theta$ into
$\mathcal B(H)$ admits a factorization through a unitary operator whose
spectrum is contained in $K_\theta$, and is therefore completely
contractive. Consequently, $E_\theta$ has a unique operator space
structure. For $0<\theta,\varphi<\pi$, we also show that
$E_\theta$ and $E_\varphi$ are isometrically isomorphic if and only if
$\theta=\varphi$, and that none
of these spaces is isometrically isomorphic to $\ell_1^2$ or
$\ell_\infty^2$. Thus we obtain uncountably many mutually
non-isometric two-dimensional complex Banach spaces with a unique
operator space structure, giving a negative answer to a question of Pisier.
\end{abstract}
\titleformat{\section}[block]{\normalfont\Large\bfseries\filcenter}{\thesection}{1em}{}
\titleformat{\subsection}[block]{\normalfont\large\bfseries\filcenter}{\thesubsection}{1em}{}
\section{Introduction}\label{sec:introduction}
Let $\mathcal D\subseteq\mathbb C^m$ be the open unit ball of a
norm $\|\cdot\|_{\mathcal D}$, and let $M_n$ be equipped with the
usual operator norm. Denote by $\mathcal O(\overline{\mathcal D})$
the closure of the polynomials on $\overline{\mathcal D}$ in the
supremum norm. For $w\in\mathcal D$ and an $m$-tuple
$\mathbf T=(T_1,\ldots,T_m)$ of matrices in $M_n$, set
\[
\langle\Delta f(w),\mathbf T\rangle
:=
\sum_{j=1}^m
\frac{\partial f}{\partial z_j}(w)T_j,
\qquad\text{for all }f\in\mathcal O(\overline{\mathcal D}).
\]
Then
\begin{equation}\label{PARHOM}
\Phi_{(w,\mathbf T)}(f)
:=
\begin{pmatrix}
f(w)I_n & \langle\Delta f(w),\mathbf T\rangle\\
0 & f(w)I_n
\end{pmatrix}
\end{equation}
defines a continuous unital algebra homomorphism from
$\mathcal O(\overline{\mathcal D})$ into $M_{2n}$. We call such
a map a \emph{Parrott-like homomorphism}. These homomorphisms
generalize the construction underlying Parrott's example
\cite{PA70}; their contractivity and complete contractivity were
studied systematically by Bagchi and Misra \cite{BM95}.

A basic question is to determine which domains admit a
contractive homomorphism that is not completely contractive.
This question is closely related to the existence of boundary
normal dilations. Indeed Arveson's dilation theorem characterizes
complete contractivity of unital representations of function
algebras by the existence of dilations supported on the
Shilov boundary \cite[Theorem~1.2.2 and its corollary]{Arveson}.

For homomorphisms based at the origin, the question can be
expressed in terms of the operator space structure of the
underlying Banach space. Namely, every contractive
Parrott-like homomorphism $\Phi_{(0,\mathbf T)}$, for every
matrix size $n$, is completely contractive if and only if
$(\mathbb C^m,\|\cdot\|_{\mathcal D})$ has a unique operator
space structure; see \cite{PA92} and
\cite[Theorem~1.1 and Lemma~1.4]{BM95}.
For a complex Banach space $E$, put
\[
\alpha(E)
:=
\bigl\|\operatorname{id}:\operatorname{MIN}(E)
\longrightarrow\operatorname{MAX}(E)\bigr\|_{\mathrm{cb}},
\]
where $\operatorname{MIN}(E)$ and $\operatorname{MAX}(E)$
denote its minimal and maximal operator space structures,
recalled in Section~\ref{sec:operator-spaces}.
We say $E$ has a unique operator space structure precisely when
$\alpha(E)=1$.

Using results from Banach space geometry, Paulsen \cite{PA92}
proved that every complex Banach space of dimension at least
five admits different operator space structures. In fact, no
complex Banach space of dimension at least three has a unique
operator space structure, whereas $\ell_1^2$ and
$\ell_\infty^2$ do have this property
\cite[Corollary~3.9 and Proposition~3.10]{Pisier}.
Pisier posed the following question
\cite[Chapter~3, Open problems, Problem~1, p.~79]{Pisier}:
\begin{quote}
``Are the spaces $\mathbb C$, $\ell_1^2$, and $\ell_\infty^2$
the only operator spaces $E\ne\{0\}$ admitting a unique
operator space structure, that is, for which $\alpha(E)=1$?''
\end{quote}

In this paper, we give a negative answer to this question by
constructing uncountably many mutually non-isometric
two-dimensional complex Banach spaces with a unique operator
space structure, none of which is isometrically isomorphic to
$\ell_1^2$ or $\ell_\infty^2$.
Our choice of examples was motivated by \cite[Corollary 2.8]{ARFJS95} and \cite[Theorem 1.4]{Ray20}. We consider the spaces $E_\theta$.
For $0<\theta<\pi$, consider
\[
K_\theta=\{e^{is}:-\theta\le s\le\theta\},
\qquad
E_\theta=\operatorname{span}_{\mathbb C}\{1,z\}
\subset C(K_\theta),
\]
where $z(\zeta)=\zeta$ for all $\zeta\in K_\theta$.
\begin{theorem}\label{thm:unique-oss}
Let $0<\theta<\pi$, and equip $E_\theta$ with the operator space
structure inherited from $C(K_\theta)$. Every contractive linear map
$u:E_\theta\to\mathcal B(H)$ is completely contractive, for every
complex Hilbert space $H$. Consequently, $E_\theta$ has a unique
operator space structure.
\end{theorem}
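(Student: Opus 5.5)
Following the abstract, the plan is to show that every contractive $u:E_\theta\to\BH$ factors as $u(a+bz)=V^*(aI+bU)V$, where $V$ is an isometry and $U$ is a unitary with $\sigma(U)\subseteq K_\theta$. Such a map is completely contractive. Indeed, by the continuous functional calculus, $f\mapsto f(U)$ is a $*$-homomorphism $C(K_\theta)\to\mathcal B(\widetilde H)$, hence completely contractive. Compressing by $V$ preserves complete contractivity, and restricting to $E_\theta$ gives $u$. Uniqueness of the operator space structure then follows formally. Apply the statement to the identity $\operatorname{MIN}(E_\theta)\to\operatorname{MAX}(E_\theta)$, viewed inside some $\BH$ via a completely isometric embedding of $\operatorname{MAX}(E_\theta)$. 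Note that $E_\theta\subset C(K_\theta)$ carries the minimal structure, since $C(K)$ is commutative. This gives $\alpha(E_\theta)=1$.

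For the factorization, write $A=u(1)$ and $B=u(z)$. Contractivity means $\|aA+bB\|\le\sup_{|s|\le\theta}|a+be^{is}|$ for all $a,b\in\mathbb C$. I would first reduce to the case $u(1)=I$, or at least to $A$ being positive, using a Paulsen-type $2\times2$ trick. Since $1$ has norm one in $E_\theta$, one can try to extend $u$ to a unital map. The cleaner route is the Wittstock/Arveson framework. Consider the operator system $S=E_\theta+E_\theta^*\subseteq C(K_\theta)$, spanned by $1,z,\bar z$. If the unital map $\tilde u(a+bz+c\bar z)$ is positive on $S$, then it is completely positive, because the range $C(K_\theta)$ is commutative and Arveson extension together with Stinespring applies. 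Stinespring's theorem then yields the unitary dilation. So the core is twofold. First, a contractive $u$ should be written as a compression of a unital contractive map on a doubled space, using $\begin{pmatrix}0&u\\u^*&0\end{pmatrix}$-type constructions. Second, a unital contraction on $E_\theta$ should give a positive map on $S$.

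The second step is the key computation. A real trigonometric polynomial $p=c+w z+\bar w\bar z\ge0$ on the arc $K_\theta$ must map to a positive operator $cI+wB+\bar wB^*$. The positive cone of degree-one trig polynomials on an arc is generated by explicit extremal elements. These are squares $|\alpha+\beta z|^2$, which are positive on the whole circle, together with polynomials vanishing at the endpoints $e^{\pm i\theta}$, such as $\cos s-\cos\theta$ and their rotations restricted suitably. For the squares, positivity follows from the contraction hypothesis via the usual argument, which needs $\|B\|\le1$; this holds since $\|z\|=1$. For the endpoint-vanishing generator $\Ree z-\cos\theta$, we need $\Ree B\ge\cos\theta$. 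This should be extracted from $\|aI+bB\|\le\sup_{K_\theta}|a+be^{is}|$ by taking $b=-1$ and $a$ large real. Then $\sup_{K_\theta}|a-e^{is}|$ is attained at $s=\pm\theta$, with $|a-e^{i\theta}|^2=a^2-2a\cos\theta+1$. Expanding $\|(aI-B)h\|^2\le a^2-2a\cos\theta+1$ gives $2a\Ree\langle Bh,h\rangle\ge 2a\cos\theta+\|Bh\|^2-1$, and dividing by $a$ and letting $a\to\infty$ gives $\Ree B\ge\cos\theta$.

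The main obstacle I expect is the first reduction: handling a general, non-unital $u$ where $A=u(1)$ is an arbitrary contraction. I would try to find contractions $C,D$ and a unital contraction $\tilde u$ with $u=C\tilde u(\cdot)D$. The natural attempt is to apply the Paulsen-type trick to the unital map $\begin{pmatrix}I&u\\u^*&I\end{pmatrix}$ on $S$. Proving that this is positive requires norm inequalities for $\|aA+bB\|$ over all $a,b$ simultaneously, not just the two extremal families above. This step likely requires a careful use of the geometry of the unit ball of $E_\theta$, in particular its description via the endpoints $e^{\pm i\theta}$ and the point $1$. I would attack it by identifying the extreme points of the dual ball of $E_\theta$.
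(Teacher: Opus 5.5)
Your proposal has a genuine gap. Everything you actually verify is positivity (equivalently, contractivity). The passage to complete positivity rests on a false principle. Arveson's extension theorem extends completely positive maps, not positive ones. Positive maps are automatically completely positive when defined on all of $C(K)$, or when the range is commutative. Neither applies here: the commutative algebra is the ambient algebra of the domain, and the range is $\mathcal B(H)$.

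Indeed, take a unital map $\phi$ on $S=\operatorname{span}\{1,z,\bar z\}\subset C(K_\theta)$ with $\phi(z)=B$. Positivity is equivalent to $W(B)\subseteq\operatorname{conv}K_\theta$. Complete positivity is equivalent to $B=V^*UV$ with $V$ an isometry and $U$ a unitary, $\sigma(U)\subseteq K_\theta$. These differ. Take $B=\begin{pmatrix}\lambda&\varepsilon\\0&\lambda\end{pmatrix}$ with $\lambda=1-\varepsilon/2$ and $0<\varepsilon\le 1-\cos\theta$. Then $W(B)$ is the disc of radius $\varepsilon/2$ about $\lambda$, which lies in $\operatorname{conv}K_\theta$, yet $\|B\|=\varepsilon/2+\sqrt{\lambda^2+\varepsilon^2/4}>1$.

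So your extreme-ray computation does not settle even the unital case, although it is correct, as is your derivation of $\Ree B\ge\cos\theta$. What is needed there is a constrained dilation theorem. The paper uses Choi--Li: a contraction $T$ with $\Ree T\ge cI$ equals $V^*UV$ for a unitary $U$ on $H\oplus H$ with $\Ree U\ge cI$, hence $\sigma(U)\subseteq K_\theta$. Also, $u=V^*(aI+bU)V$ with $V$ isometric forces $u(1)=I$, so your opening factorization fails for general $u$ (e.g.\ $u=0$). The correct form is $R(aI+bU)V$ with $R$ a contraction.

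The non-unital reduction, which you leave open, is the heart of the paper, and the route you sketch does not reach it. In Paulsen's $2\times2$ system, positivity of the associated unital map is already equivalent to $\|u\|\le1$, so no new norm inequalities are needed for positivity. Complete positivity there is equivalent to $\|u\|_{\mathrm{cb}}\le1$, which is the theorem itself, and $M_2(C(K_\theta))$ is not commutative anyway. Extreme points of the dual ball of $E_\theta$ likewise carry only scalar information.

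The paper's idea is as follows:
\begin{itemize}
\item Test contractivity only on $1-tz$, $t\ge0$, to get $\|A-tB\|^2\le 1+t^2-2ct$.
\item This yields the operator pencil $D_A^2+t(A^*B+B^*A-2cI)+t^2D_B^2\ge0$ on $[0,\infty)$.
\item The Cimpri\v{c}--Zalar operator Positivstellensatz (Lemma~\ref{lem:completion}) turns this into a contraction $K$ with $D_AKD_B+D_BK^*D_A\le A^*B+B^*A-2cI$.
\item Then $T=A^*B-D_AKD_B$ is a contraction with $\Ree T\ge cI$, and $\begin{pmatrix}I&T\\T^*&I\end{pmatrix}$ dominates the Gram matrix of $(A,B)$.
\item Dilating $T$ by Choi--Li and applying Douglas's lemma to $[A\ B]$ and $[V\ UV]$ gives $A=RV$ and $B=RUV$.
\end{itemize}
This is exactly the factorization $u=C\tilde u(\cdot)D$ you were seeking, with $\tilde u(f)=f(U)$.
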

The proof of Theorem~\ref{thm:unique-oss} is given in
Section~\ref{sec:main-results} and uses Theorem~\ref{thm:arc}.
Corollary~\ref{cor:not-ell1} and Proposition~\ref{prop:not-ellinfty}
show that $E_\theta$ is not isometrically isomorphic to either
$\ell_1^2$ or $\ell_\infty^2$. Proposition~\ref{prop:pairwise-nonisometric}
shows that $E_\theta$ and $E_\varphi$ are isometrically isomorphic
if and only if $\theta=\varphi$.

Section~\ref{sec:operator-spaces} gives the definitions of operator
spaces and their minimal and maximal structures.
Section~\ref{sec:main-results} states Lemma~\ref{lem:positive-block}
and proves Lemma~\ref{lem:completion}, Corollary~\ref{cor:gram},
and Theorems~\ref{thm:arc} and~\ref{thm:unique-oss}.
Section~\ref{sec:nonisometry} proves that the spaces $E_\theta$
are mutually non-isometric and are not isometric to $\ell_1^2$
or $\ell_\infty^2$.
Appendices~\ref{app:complexification} and~\ref{app:realification}
provides some basic results on complexification and realification.
Appendices~\ref{app:complexification} and~\ref{app:realification}
are included because the author could not find a suitable reference
where these results are collected.

\section{Operator spaces}\label{sec:operator-spaces}

We denote by $M_{m,n}$ the space of complex $m\times n$ matrices,
and write $M_n=M_{n,n}$.

\begin{defn}[Abstract operator space]\label{def:abstract-operator-space}
An abstract operator space is a normed complex linear space $X$
together with norms $\|\cdot\|_n$ on $M_n(X)$ for all $n\ge1$,
satisfying the following conditions:
\begin{itemize}
\item The norm $\|\cdot\|_1$ on $M_1(X)=X$ agrees with the given
norm of $X$.
\item For all $A\in M_n(X)$ and $B\in M_m(X)$,
\[
\|A\oplus B\|_{n+m}=\max\{\|A\|_n,\|B\|_m\},
\]
where $A\oplus B$ is the block diagonal array with diagonal
blocks $A$ and $B$.
\item For all $A\in M_{n,m}$, $B\in M_{m,n}$ and $C\in M_m(X)$,
\[
\|ACB\|_n
\le
\|A\|_{\ell_2^m\to\ell_2^n}
\|C\|_m
\|B\|_{\ell_2^n\to\ell_2^m}.
\]
\end{itemize}
\end{defn}

\begin{defn}[Concrete operator space]\label{def:concrete-operator-space}
A concrete operator space is a triple $(X,i,H)$, where
$X$ is a normed complex linear space, $H$ is a complex
Hilbert space, and $i:X\to\mathcal B(H)$ is a linear isometry.
\end{defn}

Let $(X,i,H)$ be a concrete operator space. For all $m,n\ge1$,
the map
\[
\mathrm{id}_{M_{m,n}}\otimes i:
M_{m,n}(X)\longrightarrow M_{m,n}(\mathcal B(H))
\]
applies $i$ to each entry. We identify
$M_{m,n}(\mathcal B(H))$ with
$\mathcal B(H^n,H^m)$, where $H^r$
is the Hilbert space direct sum of $r$ copies of $H$.
Under this identification, an array $(T_{ij})$ acts by
\[
(\xi_j)_{j=1}^n\longmapsto
\left(\sum_{j=1}^nT_{ij}\xi_j\right)_{i=1}^m
\qquad\text{for all }(\xi_j)_{j=1}^n\in H^n.
\]
For all $(x_{ij})\in M_{m,n}(X)$, define
\begin{equation}\label{eq:concrete-matrix-norm}
\|(x_{ij})\|_{m,n}
:=\bigl\|(i(x_{ij}))\bigr\|_{H^n\to H^m}.
\end{equation}
For $m=n$, we write $\|\cdot\|_n=\|\cdot\|_{n,n}$.
These norms satisfy the conditions in
Definition~\ref{def:abstract-operator-space}. The converse follows
from Ruan's theorem \cite{Ruan1988}.

\begin{theorem}[Ruan's theorem]\label{thm:ruan}
Let $X$ be an abstract operator space with matrix norms
$\{\|\cdot\|_n:n\ge1\}$. There are a complex Hilbert space
$H$ and a linear isometry $i:X\to\mathcal B(H)$
such that
\begin{equation}\label{eq:ruan-representation}
\|(x_{ij})\|_n
=\bigl\|(\mathrm{id}_{M_n}\otimes i)((x_{ij}))\bigr\|
_{H^n\to H^n}
\end{equation}
for all $n\ge1$ and all $(x_{ij})\in M_n(X)$.
\end{theorem}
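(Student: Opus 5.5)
The plan is to reduce the theorem to a statement about finite matrices. It suffices to prove the following: for every $n$ and every $x\in M_n(X)$ with $\|x\|_n=1$, there is a linear map $\varphi:X\to M_n$ with $\|\varphi_k\|\le 1$ for all $k$ (i.e.\ $\|\varphi\|_{\mathrm{cb}}\le1$) and $\|\varphi_n(x)\|=1$. Granting this, let $i$ be the direct sum $\bigoplus\varphi$ over all such pairs $(n,x)$, acting on $H=\bigoplus\ell_2^{n}$. Each $\varphi$ is completely contractive, so $\|(\mathrm{id}_{M_k}\otimes i)(y)\|\le\|y\|_k$. Some summand attains the norm at every matrix level, so we get equality, which is exactly the identity in Theorem~\ref{thm:ruan}. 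The case $k=1$ gives that $i$ is isometric.

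To construct $\varphi$, first pick, by Hahn--Banach, a functional $f$ on $M_n(X)$ with $\|f\|=1$ and $f(x)=1$. The key step is to find density matrices $\rho,\sigma\in M_n$ (positive, trace one) such that
\[
|f(a\,y\,b)|\le \|y\|_m\,\operatorname{tr}(\rho\,aa^*)^{1/2}\operatorname{tr}(\sigma\, b^*b)^{1/2}
\]
for all $m$, all $a\in M_{n,m}$, $y\in M_m(X)$ and $b\in M_{m,n}$. For each triple $(a,y,b)$ with $\|y\|_m\le1$, consider the continuous function on the compact convex set $S\times S$ of pairs of density matrices given by
\[
F_{a,y,b}(\rho,\sigma)=\tfrac12\bigl(\operatorname{tr}(\rho aa^*)+\operatorname{tr}(\sigma b^*b)\bigr)-\Ree f(ayb).
\]
\begin{itemize}
\item Each $F_{a,y,b}$ is nonnegative somewhere. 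Take $\rho,\sigma$ to be rank-one projections onto top singular vectors of $a^*$ and $b$. Then the third axiom, together with $\|f\|\le1$, gives $\Ree f(ayb)\le\|a\|\|b\|\le\frac12(\|a\|^2+\|b\|^2)$, after first rescaling $a\mapsto ta$, $b\mapsto t^{-1}b$.
\item These functions form a convex cone. A positive combination $\sum F_{a_j,y_j,b_j}$ equals $F_{a,y,b}$ with $a=[a_1\ \cdots\ a_r]$, $y=y_1\oplus\cdots\oplus y_r$ and $b=[b_1;\dots;b_r]$. This uses the direct-sum axiom, which gives $\|y\|=\max_j\|y_j\|\le1$.
\end{itemize}
A cone of continuous functions, each of which is nonnegative somewhere, has a common point where all of them are nonnegative. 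This follows from a Hahn--Banach separation in $C(S\times S)$ from the open negative cone, plus the Riesz representation and averaging a probability measure to a single point, using that $F$ is affine in $(\rho,\sigma)$. At that common point, optimizing the scaling $t$ turns the AM--GM form into the product form above.

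From $\rho,\sigma$, I will build $\varphi$ by a GNS-type construction. Define semi-inner products on $M_{1,n}$ and $M_{n,1}$ by $\langle\alpha,\beta\rangle_\rho=\operatorname{tr}(\rho\,\alpha^*\!\beta)$-type formulas, or more concretely via $\rho^{1/2}$ and $\sigma^{1/2}$. The estimate then shows that $(\alpha,\beta)\mapsto f(\alpha^*\,y\,\beta)$, with the appropriate embeddings of rows and columns, is a bounded sesquilinear form of norm at most $\|y\|$. This yields operators $\varphi(v)$ on (quotients of) $\ell_2^n$. For $y\in M_k(X)$, applying the estimate with $m=k$ gives $\|\varphi_k(y)\|\le\|y\|_k$. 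Applying it with $a=\rho^{-1/2}$-type and $b$-type vectors gives a matrix coefficient of $\varphi_n(x)$ equal to $f(x)=1$, so $\|\varphi_n(x)\|\ge1$.

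I expect the main obstacle to be the separation/minimax step producing a single pair $(\rho,\sigma)$ that works simultaneously for all $m$ and all triples. This requires checking carefully that the family is closed under sums, which is where the direct-sum axiom enters essentially, and that it is closed under the rescalings. If $\rho$ or $\sigma$ is singular, the GNS step needs a passage to the quotient by null vectors; I would handle this by perturbing to invertible densities or by working on the support projections.
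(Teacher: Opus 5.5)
The paper does not prove this statement. It quotes it from \cite{Ruan1988} and uses it only to identify abstract with concrete operator spaces, so there is no argument of its own to compare with. Your proposal is the standard Effros--Ruan proof (as in \cite{EffrosRuan2000} and \cite[Chapter~2]{Pisier}): Hahn--Banach on $M_n(X)$, a minimax step producing two density matrices, then a finite-dimensional GNS construction. It is correct. Compared with the bare citation, it is self-contained, using only Hahn--Banach, the Riesz representation theorem and linear algebra. It also shows slightly more: $H$ can be taken to be an $\ell_2$-direct sum of finite-dimensional spaces, so $X$ sits completely isometrically in an $\ell_\infty$-direct sum of matrix algebras.

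A few details need tightening.

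\emph{The separation step.} The sentence ``a cone of continuous functions, each nonnegative somewhere, has a common point where all are nonnegative'' is false in that generality. On a two-point space, the cone spanned by $h$ and $-h$ with $h=(1,-1)$ is a counterexample. Separation only gives a probability measure $\mu$ with $\int F\,d\mu\ge0$ for every $F$ in the cone. It is the affineness of $F_{a,y,b}$ on the compact convex set $S\times S$ that lets you replace $\mu$ by its barycenter. That is what your justification actually uses, so only the phrasing needs fixing.

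\emph{Where rescaling belongs.} In your first bullet no rescaling is needed, since $\|a\|\|b\|\le\frac12(\|a\|^2+\|b\|^2)$ always holds. Rescaling is needed elsewhere:
\begin{itemize}
\item to absorb positive multiples in the cone, via $\lambda F_{a,y,b}=F_{\sqrt{\lambda}\,a,\,y,\,\sqrt{\lambda}\,b}$;
\item together with a rotation $y\mapsto e^{i\vartheta}y$, to pass from the additive bound on $\Ree f(ayb)$ to the product bound on $|f(ayb)|$.
\end{itemize}

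\emph{The norming step.} This involves no $\rho^{-1/2}$. Write the amplified form as $(\gamma,\beta)\mapsto f(\Gamma^{*}yB)$, with $\Gamma^{*}\in M_{n,k}$ built from the $\gamma_j$ and $B\in M_{k,n}$ from the $\beta_j$. Take $k=n$ and $\Gamma^{*}=B=I_n$. These vectors have norms $(\operatorname{tr}\rho)^{1/2}=(\operatorname{tr}\sigma)^{1/2}=1$ and give the matrix coefficient $f(x)=1$. Since you pass to the quotient by null vectors, singular $\rho$ or $\sigma$ cause no difficulty, and no perturbation is needed.
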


\begin{rem}
By Theorem~\ref{thm:ruan}, we may identify an abstract operator
space with a concrete operator space. We shall use the term
operator space for either realization.
\end{rem}

Let $X,Y$ be operator spaces and let $u:X\to Y$ be linear.
For all $n\ge1$, the amplification $u^{(n)}:M_n(X)\to M_n(Y)$ is
\begin{equation}\label{eq:amplification}
u^{(n)}((x_{ij})):=(u(x_{ij}))
\qquad\text{for all }(x_{ij})\in M_n(X).
\end{equation}
The completely bounded norm of $u$ is
\[
\|u\|_{\mathrm{cb}}:=\sup_{n\ge1}\|u^{(n)}\|.
\]
We say that $u$ is completely bounded if
$\|u\|_{\mathrm{cb}}<\infty$, and completely contractive if
$\|u\|_{\mathrm{cb}}\le1$. A linear map is a complete isometry
if each of its amplifications is an isometry.

For a normed space $(X,\|\cdot\|_X)$, write
\[
(X,\|\cdot\|_X)_1:=\{x\in X:\|x\|_X\le1\}
\]
for its closed unit ball, and write $(X)_1$ when the norm is clear.
We denote by $X^*$ the space of bounded complex-linear
functionals on $X$, with its usual dual norm.

Let $X$ be a normed complex linear space. Its minimal operator
space structure is defined by
\begin{equation}\label{eq:min-matrix-norm}
\|(x_{ij})\|_{M_n(\operatorname{MIN}(X))}
:=\sup_{f\in(X^*)_1}
\bigl\|(f(x_{ij}))\bigr\|_{\ell_2^n\to\ell_2^n}
\end{equation}
for all $n\ge1$ and all $(x_{ij})\in M_n(X)$.
These norms satisfy Definition~\ref{def:abstract-operator-space}
and agree with the given norm at $n=1$; see
\cite{EffrosRuan2000} and \cite[Chapter~3]{Pisier}.
We denote the resulting operator space by $\operatorname{MIN}(X)$.

The maximal operator space structure on $X$ is defined by
\begin{equation}\label{eq:max-matrix-norm}
\|(x_{ij})\|_{M_n(\operatorname{MAX}(X))}
:=\sup\left\{
\bigl\|(T(x_{ij}))\bigr\|_{H^n\to H^n}:
T:X\to\mathcal B(H),\ \|T\|\le1
\right\}
\end{equation}
for all $n\ge1$ and all $(x_{ij})\in M_n(X)$.
Here the supremum is over all complex Hilbert spaces $H$
and all contractive linear maps $T:X\to\mathcal B(H)$.
These norms also satisfy Definition~\ref{def:abstract-operator-space}
and agree with the norm of $X$ at $n=1$; see
\cite{EffrosRuan2000} and \cite[Chapter~3]{Pisier}.
We write $\operatorname{MAX}(X)$ for this operator space.

For any operator space structure on $X$ inducing its given norm,
\[
\|(x_{ij})\|_{M_n(\operatorname{MIN}(X))}
\le\|(x_{ij})\|_{M_n(X)}
\le\|(x_{ij})\|_{M_n(\operatorname{MAX}(X))}
\]
for all $n\ge1$ and all $(x_{ij})\in M_n(X)$; see
\cite[Chapter~3]{Pisier}.
Consequently, $X$ has a unique operator space structure if and only if
these minimal and maximal norms agree at every matrix level.
By \eqref{eq:max-matrix-norm}, this is equivalent to saying that
every contractive linear map
$\operatorname{MIN}(X)\to\mathcal B(H)$ is completely
contractive, for every complex Hilbert space $H$.
For a compact Hausdorff space $K$ and a subspace $X\subset C(K)$
with its supremum norm, the matrix
norms inherited from $C(K)$ are those of $\operatorname{MIN}(X)$
\cite[Chapter~3]{Pisier}.
We refer to \cite{EffrosRuan2000,Pisier} for further discussion.

\section{Main results}\label{sec:main-results}
Unless specified, we consider only Hilbert spaces over complex field
and complex-linear operators. Denote $\Ree T=(T+T^*)/2$. Let $\mathbb{R}[x]$ denote all polynomials with coefficients in $\mathbb R.$ We denote $\mathcal{B}(H)_{\text{h}}$ to be the Banach space of all bounded self-adjoint
linear operators on $H.$

For $0<\theta<\pi$, let us put
\[
K_\theta=\{e^{is}:-\theta\le s\le\theta\},\qquad
E_\theta=\operatorname{span}_{\mathbb C}\{1,z\}\subset C(K_\theta),
\qquad c=\cos\theta,
\]
where $z:K_\theta\to\mathbb C$ is given by $z(\zeta)=\zeta.$

We begin with the following result \cite[Proposition~3(2)]{CZ}.
\begin{prop}\label{prop:operator Riesz type}
Let $H$ be a real Hilbert space and let
$F\in\mathbb R[x]\otimes\mathcal{B}(H)_{\mathrm{h}}$. Then
$F(t)\geq 0$ for all $t\ge 0$ if and only if
\[
  F(t)=R_0+t\,R_1,
\]
where $R_0,R_1$ are sums of Hermitian squares of polynomials
from $\mathbb R[x]\otimes\mathcal{B}(H)$, that is for some $(U_i)_{i=1}^n,(V_j)_{j=1}^m\subseteq \mathbb{R}[x]\otimes\mathcal{B}(H),$ $R_0=\sum_{i=1}^nU_i^*U_i$ and $R_1=\sum_{j=1}^mV_j^*V_j.$
\end{prop}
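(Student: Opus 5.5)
The plan is to reduce the half-line statement to the whole-line statement by the substitution $t=s^2$. Then I would use the operator-valued Fejér--Riesz (Rosenblum--Rovnyak) theorem on $\mathbb R$, and split the resulting factors into even and odd parts.

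The converse direction is immediate. If $F=R_0+tR_1$ with $R_0,R_1$ sums of Hermitian squares, then for $t\ge0$ and $\xi\in H$ we have $\langle R_0(t)\xi,\xi\rangle=\sum_i\|U_i(t)\xi\|^2\ge0$, and similarly for $R_1$. Hence $F(t)\ge0$.

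For the nontrivial direction, let $F(t)=\sum_{k=0}^dt^kF_k$ with $F_k\in\mathcal B(H)_{\mathrm h}$ and $F(t)\ge0$ for $t\ge0$. Put $G(s)=F(s^2)$. This is an even self-adjoint operator polynomial with $G(s)\ge0$ for every $s\in\mathbb R$. I would invoke the operator Fejér--Riesz theorem on the real line in the following form: a self-adjoint operator polynomial that is positive semidefinite on all of $\mathbb R$ is a finite sum of Hermitian squares $\sum_iU_i(s)^*U_i(s)$. One route is the circle version (Rosenblum) transported by the Cayley transform $s=i(1-\zeta)/(1+\zeta)$, after clearing the denominator $|1+\zeta|^{2\deg}$. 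If that theorem is only available over complex scalars, I would first pass to the complexification $\HC$. There $G$ acts with real coefficients, so each complex factor can be written as $U_i=X_i+iY_i$, with $X_i,Y_i$ polynomials over $\mathcal B(H)$ obtained by taking real and imaginary parts of the coefficients. Since $G$ is real, comparing real parts of $G=\sum_iU_i^*U_i$ gives
\[
G=\sum_i\bigl(X_i^*X_i+Y_i^*Y_i\bigr),
\]
which is a sum of Hermitian squares with real coefficients.

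Next, write each real factor as $U(s)=A(s^2)+sB(s^2)$, collecting its even and odd powers of $s$. Then
\[
U(s)^*U(s)=A(s^2)^*A(s^2)+s^2B(s^2)^*B(s^2)+s\bigl(A(s^2)^*B(s^2)+B(s^2)^*A(s^2)\bigr).
\]
Sum over all factors and compare with the even polynomial $G$. The odd-degree part must then vanish, so
\[
F(s^2)=\sum A(s^2)^*A(s^2)+s^2\sum B(s^2)^*B(s^2).
\]
Replacing $s^2$ by $t$ gives $F(t)=R_0(t)+tR_1(t)$ with $R_0=\sum A^*A$ and $R_1=\sum B^*B$. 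This step uses the fact that two polynomials in $s^2$ which agree for all real $s$ agree as polynomials in $t$.

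The main obstacle is the whole-line operator Fejér--Riesz step in infinite dimensions. Positivity there does not come from finite-dimensional compactness arguments, so one has to rely on Rosenblum--Rovnyak-type outer factorization, which is where the cited result of \cite{CZ} enters. The bookkeeping with realification and complexification is routine by comparison.
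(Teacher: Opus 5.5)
The paper does not prove this proposition; it quotes it from \cite[Proposition~3(2)]{CZ}. So there is no in-paper argument to compare against, and your proof supplies one. Your proof is correct, and it is the standard way to obtain the half-line statement from the whole-line one:
\begin{enumerate}
\item substitute $t=s^2$;
\item factor the even polynomial $G(s)=F(s^2)$ by the operator Fej\'er--Riesz theorem on $\mathbb R$, i.e.\ Rosenblum's circle theorem pulled back by the Cayley transform, which holds for arbitrary complex Hilbert spaces;
\item descend from $\HC$ to $H$ by writing each coefficient as $W=\tfrac12(W+\kappa W\kappa)+i\cdot\tfrac1{2i}(W-\kappa W\kappa)$, with both parts commuting with $\kappa$, so that $U^*U=(X^*X+Y^*Y)+i(X^*Y-Y^*X)$ and uniqueness of this splitting gives the real sum of squares;
\item kill the odd part using $G(s)=G(-s)$.
\end{enumerate}

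Compared with the bare citation, your route does three things:
\begin{itemize}
\item It isolates the single deep input, namely the Rosenblum--Rovnyak factorization. That result is not from \cite{CZ}; \cite{CZ} is only where the present half-line form is stated.
\item It handles the real-scalar setting explicitly. This matters because Lemma~\ref{lem:completion} applies the proposition on $H_{\mathbb R}$.
\item It gives degree control for free. Take the factor of $G$ of degree at most $\deg F$ (the Fej\'er--Riesz factor of a degree-$2d$ polynomial has degree at most $d$). Then $\deg A\le\lfloor\deg F/2\rfloor$ and $\deg B\le\lfloor(\deg F-1)/2\rfloor$. So for $\deg F\le2$ you get $U_i=A_i+tB_i$ and $V_j$ constant directly, which is exactly what the proof of Lemma~\ref{lem:completion} extracts by a separate parity argument.
\end{itemize}

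One minor point: when clearing denominators in the Cayley transform, the natural factor is $|1+\zeta|^{\deg G}$. A larger even power is also harmless, since the surplus top coefficients of the factor must vanish.
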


\begin{samepage}
We shall also use the following standard result; see \cite[Theorem~5.11]{MKX19} for the complex case.
\begin{lemma}[Positive-block factorization]\label{lem:positive-block}
Let $H_1,H_2$ be real Hilbert spaces or complex Hilbert spaces. Let $P\in\mathcal B(H_1)$ and $Q\in\mathcal B(H_2)$ be positive,
and let $X\in\mathcal B(H_2,H_1)$. Then
\[
\begin{pmatrix}P&X\\X^*&Q\end{pmatrix}\ge0
\]
if and only if there is a contraction $K\in\mathcal B(H_2,H_1)$ such that
\[
X=P^{1/2}KQ^{1/2}.
\]
\end{lemma}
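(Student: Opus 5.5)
The plan is to prove both implications directly, using one argument that works over $\mathbb R$ and over $\mathbb C$. The only tools are the square roots $P^{1/2},Q^{1/2}$, which exist as positive operators in both settings, and the Riesz representation of bounded bilinear or sesquilinear forms.

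For the direction ($\Leftarrow$), suppose $X=P^{1/2}KQ^{1/2}$ with $\|K\|\le1$. Factor
\[
\begin{pmatrix}P&X\\X^*&Q\end{pmatrix}
=\begin{pmatrix}P^{1/2}&0\\0&Q^{1/2}\end{pmatrix}
\begin{pmatrix}I&K\\K^*&I\end{pmatrix}
\begin{pmatrix}P^{1/2}&0\\0&Q^{1/2}\end{pmatrix}.
\]
It then suffices to show that the middle factor is positive. For $(x,y)\in H_1\oplus H_2$ we have
\[
\Bigl\langle \begin{pmatrix}I&K\\K^*&I\end{pmatrix}\begin{pmatrix}x\\y\end{pmatrix},\begin{pmatrix}x\\y\end{pmatrix}\Bigr\rangle
=\|x\|^2+\|y\|^2+2\Ree\langle Ky,x\rangle
\ge(\|x\|-\|y\|)^2\ge0 .
\]
Conjugating a positive operator by a self-adjoint operator preserves positivity, so the block matrix is positive.

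For the direction ($\Rightarrow$), the first step is a Cauchy--Schwarz type inequality:
\[
|\langle Xy,x\rangle|^2\le\langle Px,x\rangle\langle Qy,y\rangle\qquad(x\in H_1,\ y\in H_2).
\]
To get it, apply positivity to the vector $(t\lambda x,y)$, where $t$ is real and $\lambda$ is a scalar of modulus one ($\lambda=\pm1$ in the real case) chosen so that $\lambda\langle Xy,x\rangle$ is real and nonnegative. This gives a real quadratic in $t$ that is nonnegative for all $t$, so its discriminant is nonpositive. Rewriting the result gives
\[
|\langle Xy,x\rangle|\le\|P^{1/2}x\|\,\|Q^{1/2}y\|.
\]

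The second step builds $K$. Define a form $b$ on $\operatorname{ran}Q^{1/2}\times\operatorname{ran}P^{1/2}$ by
\[
b(Q^{1/2}y,P^{1/2}x):=\langle Xy,x\rangle .
\]
This is the step I expect to be the main obstacle, because $P$ and $Q$ need not be invertible or have closed range, so it is not obvious that $b$ is well defined. The inequality above handles this: if $P^{1/2}x=P^{1/2}x'$ then $|\langle Xy,x-x'\rangle|\le 0$, and similarly in the $y$ variable. The same inequality shows $|b(v,u)|\le\|v\|\|u\|$, so $b$ is bounded, linear in the first variable, and conjugate-linear in the second (bilinear in the real case). Extend $b$ by continuity to $\overline{\operatorname{ran}}\,Q^{1/2}\times\overline{\operatorname{ran}}\,P^{1/2}$, and then by zero on the orthogonal complements to all of $H_2\times H_1$; the bound $\|v\|\|u\|$ still holds. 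By Riesz representation there is a unique $K\in\mathcal B(H_2,H_1)$ with $b(v,u)=\langle Kv,u\rangle$ and $\|K\|\le1$.

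Finally, for all $x,y$,
\[
\langle P^{1/2}KQ^{1/2}y,x\rangle=\langle KQ^{1/2}y,P^{1/2}x\rangle=b(Q^{1/2}y,P^{1/2}x)=\langle Xy,x\rangle ,
\]
using that $P^{1/2}$ is self-adjoint. Hence $X=P^{1/2}KQ^{1/2}$. Nothing in the argument depends on the scalar field beyond the choice of $\lambda$, so the real and complex cases are handled simultaneously.
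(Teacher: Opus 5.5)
Your proof is correct. The paper gives no proof of this lemma: it calls it standard and cites \cite[Theorem~5.11]{MKX19}, and that reference covers only the complex case.

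Your ($\Leftarrow$) direction is the same congruence of $\begin{psmallmatrix}I&K\\K^*&I\end{psmallmatrix}$ by $\operatorname{diag}(P^{1/2},Q^{1/2})$ that the paper writes out inside the proofs of Lemma~\ref{lem:completion} and Corollary~\ref{cor:gram}. For ($\Rightarrow$) you give the direct argument:
\begin{itemize}
\item the inequality $|\langle Xy,x\rangle|\le\|P^{1/2}x\|\,\|Q^{1/2}y\|$;
\item a contractive form on $\operatorname{ran}Q^{1/2}\times\operatorname{ran}P^{1/2}$, well defined by that same inequality;
\item extension by continuity and then by zero;
\item Riesz representation.
\end{itemize}
Nothing in this depends on the scalar field or on $P,Q$ being invertible or having closed range.

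This is a real gain over the paper's treatment. The real case of Lemma~\ref{lem:completion} applies this lemma on a \emph{real} Hilbert space to produce $K_0$, and the paper leaves that case unargued. Without your proof one would have to deduce it from the complex case. For example, complexify $P,Q,X$ as in Appendix~\ref{app:complexification} and take a complex contraction $K$ factoring $X_{\mathbb C}$. Then replace $K$ by $\tfrac12(K+\kappa K\kappa)$, which commutes with $\kappa$ and so restricts to a real contraction with the same factorization. The citation buys brevity; your route buys a self-contained proof valid over $\mathbb R$ and $\mathbb C$ at once.

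One small slip: your inner product is linear in the first slot, as your description of $b$ shows. So testing positivity on $(t\lambda x,y)$ gives the cross term $2t\Ree\bigl(\overline{\lambda}\langle Xy,x\rangle\bigr)$. You should therefore choose $\lambda$ with $\overline{\lambda}\langle Xy,x\rangle\ge0$, or attach the phase to $y$ instead. With that correction the discriminant step gives exactly the stated inequality.
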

\end{samepage}

\begin{lemma}\label{lem:completion}
Let $H$ be a real or complex Hilbert space, let $P,Q\in\BH$ be positive,
and let $S=S^*\in\BH$. The following are equivalent:
\begin{enumerate}
\item[(i)]
\begin{equation}\label{eq:pencil}
P+tS+t^2Q\ge0\qquad\text{for all}\ t\ge0.
\end{equation}
\item[(ii)] There is a contraction $K\in\BH$ such that
\begin{equation}\label{eq:coupling}
P^{1/2}KQ^{1/2}+Q^{1/2}K^*P^{1/2}\le S.
\end{equation}
\end{enumerate}
Equivalently, there is an operator $X\in\BH$ satisfying
\begin{equation}\label{eq:block}
\begin{pmatrix}P&X\\X^*&Q\end{pmatrix}\ge0,
\qquad X+X^*\le S.
\end{equation}
\end{lemma}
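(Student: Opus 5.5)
First, (ii) and \eqref{eq:block} are equivalent by Lemma~\ref{lem:positive-block}. Given a contraction $K$ satisfying \eqref{eq:coupling}, the operator $X:=P^{1/2}KQ^{1/2}$ makes the block matrix positive, and $X+X^*\le S$. Conversely, if \eqref{eq:block} holds, Lemma~\ref{lem:positive-block} writes $X=P^{1/2}KQ^{1/2}$ with $K$ a contraction, and then $X+X^*\le S$ is exactly \eqref{eq:coupling}. It therefore suffices to prove (i)$\Leftrightarrow$\eqref{eq:block}.

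For \eqref{eq:block}$\Rightarrow$(i), take $\xi\in H$ and $t\ge0$ and test the positive block matrix against the vector $(\xi,t\xi)$. This gives
\[
0\le\langle P\xi,\xi\rangle+t\langle (X+X^*)\xi,\xi\rangle+t^2\langle Q\xi,\xi\rangle
\le\langle (P+tS+t^2Q)\xi,\xi\rangle,
\]
where the second inequality uses $t\ge0$ and $X+X^*\le S$. In the complex case we write $2\Ree\langle X\xi,\xi\rangle=\langle (X+X^*)\xi,\xi\rangle$; in the real case $X+X^*$ is symmetric, so the same identity holds. This is the easy direction.

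For (i)$\Rightarrow$\eqref{eq:block}, I will use Proposition~\ref{prop:operator Riesz type}. In the complex case, first realify $H$ so that $P,Q,S$ become symmetric operators on a real Hilbert space; afterwards the complex structure has to be recovered, and I address this below. Apply the proposition to $F(t)=P+tS+t^2Q$. This gives $F=R_0+tR_1$ with $R_0=\sum_i U_i^*U_i$ and $R_1=\sum_j V_j^*V_j$. Degree considerations, together with comparison of leading coefficients (the $t^2$ terms must cancel appropriately), should force $U_i=A_i+tB_i$ to be affine and $V_j=C_j$ to be constant. I expect this to be the main obstacle: the sums may be infinite or of higher degree with cancelling leading terms, so I would argue by positivity of the top coefficient. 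The top coefficients $\sum (\text{leading})^*(\text{leading})$ must vanish above degree $2$, which kills those leading terms inductively. Comparing coefficients then gives
\[
P=\sum_i A_i^*A_i,\qquad Q=\sum_i B_i^*B_i,\qquad S=\sum_i (A_i^*B_i+B_i^*A_i)+\sum_j C_j^*C_j.
\]
Set $X:=\sum_i A_i^*B_i$. Then
\[
\begin{pmatrix}P&X\\X^*&Q\end{pmatrix}=\sum_i\begin{pmatrix}A_i^*\\B_i^*\end{pmatrix}\begin{pmatrix}A_i&B_i\end{pmatrix}\ge0,
\]
and $S-(X+X^*)=\sum_j C_j^*C_j\ge0$.

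In the complex case I return from the realification as follows. If $X$ is merely real-linear, replace it by its complex-linear part $X'=\tfrac12(X-JXJ)$, where $J$ denotes multiplication by $i$. Since $P,Q,S$ commute with $J$, conjugating the block inequality and the inequality $X+X^*\le S$ by $J\oplus J$ and averaging preserves both. The result is a complex-linear $X'$ satisfying \eqref{eq:block}. Note that the real and complex positivity notions agree for complex-linear operators.

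As an alternative that avoids Proposition~\ref{prop:operator Riesz type}, if it proves too delicate, I would argue as follows. Perturb to $P_\varepsilon=P+\varepsilon$ and $Q_\varepsilon=Q+\varepsilon$, which are invertible. Put $t=s\|P_\varepsilon^{1/2}\|\dots$, or more precisely conjugate by $Q_\varepsilon^{-1/4}P_\varepsilon^{1/4}$-type scalings. In general this reduces (i) to a numerical-range statement only when $P$ and $Q$ commute, so I would keep the Riesz-type route as primary. In the perturbed route one would finish by a weak$^*$ compactness argument on the contractions $K_\varepsilon$ as $\varepsilon\to0$.
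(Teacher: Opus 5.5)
Your proposal is correct and follows the paper's proof essentially step for step. It uses the operator Riesz-type factorization of $P+tS+t^2Q$, the degree and leading-coefficient argument forcing $U_i=A_i+tB_i$ and $V_j=C_j$, the choice $X=\sum_i A_i^*B_i$, Lemma~\ref{lem:positive-block} to pass between $X$ and $K$, and the test vector $(\xi,t\xi)$ for the converse. The only real deviation is in the complex case: you average the off-diagonal block, $X'=\tfrac12(X-JXJ)$, where the paper averages the real contraction $K_0$. This is equally valid and spares you checking that $P^{1/2},Q^{1/2}$ commute with $J$. Your perturbation ``alternative'' is unnecessary, and as sketched it is not a proof.
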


\begin{proof}
We first solve the real case.

By Proposition \ref{prop:operator Riesz type} there are finitely many operator polynomials
\(U_i,V_j\) such that
\begin{equation}\label{eq:quadratidecomp}
P+tS+t^2Q
=\sum_i U_i(t)^*U_i(t)+t\sum_jV_j(t)^*V_j(t), \ \text{for all}\ t\geq 0.
\end{equation}
Note that $\sum U_i^*U_i$ has even degree and $t\mapsto t\sum V_j(t)^*V_j(t)$ has odd degree. It follows that
\[
U_i(t)=A_i+tB_i,\qquad V_j(t)=C_j.
\]
for some $A_i,B_i,C_j\in \mathcal{B}(H).$
Comparing coefficients in \eqref{eq:quadratidecomp} yields
\[
P=\sum_i A_i^*A_i,\qquad Q=\sum_i B_i^*B_i,\qquad\text{and}\ S=X+X^*+Z,
\]
where $X=\sum_iA_i^*B_i$ and $Z=\sum_jC_j^*C_j\ge0.$
Consequently,
\begin{equation}\label{eq:real-gram}
\begin{pmatrix}P&X\\X^*&Q\end{pmatrix}
=\sum_i
\begin{pmatrix}A_i^*\\B_i^*\end{pmatrix}
\begin{pmatrix}A_i&B_i\end{pmatrix}\ge0,
\qquad X+X^*\le S.
\end{equation}

By Lemma~\ref{lem:positive-block} we obtain \(X=P^{1/2}K_0Q^{1/2}\) for a real-linear contraction \(K_0\). This shows $(i)$ implies $(ii).$

\medskip
We now deal with the complex case.
Now let \(H\) be complex, and regard it as the real Hilbert space
\(H_{\mathbb R}\) with inner product
\(\langle x,y\rangle_{\mathbb R}=\Ree\langle x,y\rangle\) as in Appendix~\ref{app:realification}. Identifying $P,Q$ and $S$ as bounded linear operators on $H_{\mathbb R},$ they are again positive and self-adjoint respectively. Moreover, the operators $P,Q$ and $S$ therefore satisfy the same assumptions on
\(H_{\mathbb R}\), with the same square roots \(P^{1/2},Q^{1/2}\).
These identifications follow from Proposition~\ref{prop:realification} and the square-root identity immediately following it.
The real result supplies a real-linear
contraction \(K_0\) with
\begin{equation}\label{eq:real-coupling}
P^{1/2}K_0Q^{1/2}+Q^{1/2}K_0^*P^{1/2}\le S.
\end{equation}
In above $K_0^*$ is adjoint as an operator on the real-Hilbert space \(H_{\mathbb R}\).
 Set
\[
K=\frac12(K_0+J^*K_0J)
=\frac12(K_0-JK_0J).
\] where $J$ is as in Appendix~\ref{app:realification}. By Proposition~\ref{prop:realification}, \(K\) is complex-linear and a contraction. Since \(P,Q,S,P^{1/2},Q^{1/2}\)
commute with \(J\), conjugating \eqref{eq:real-coupling} by \(J\),
adding the resulting inequality to \eqref{eq:real-coupling}, and dividing by two gives
\[
P^{1/2}KQ^{1/2}+Q^{1/2}K^*P^{1/2}\le S.
\]
Since \(K\) is now complex-linear, this is precisely \eqref{eq:coupling}
on \(H\). This proves $(i)$ implies $(ii)$ in the complex case. Now we verify the equivalence of \eqref{eq:coupling} and \eqref{eq:block}, using Lemma~\ref{lem:positive-block}.
For any contraction \(K\), putting \(X=P^{1/2}KQ^{1/2}\) gives
\[
\begin{pmatrix}P&X\\X^*&Q\end{pmatrix}
=
\begin{pmatrix}P^{1/2}&0\\0&Q^{1/2}\end{pmatrix}
\begin{pmatrix}I&K\\K^*&I\end{pmatrix}
\begin{pmatrix}P^{1/2}&0\\0&Q^{1/2}\end{pmatrix}\ge0.
\]
If \eqref{eq:coupling} holds, then \(X+X^*\le S\).
Conversely, if \eqref{eq:block} holds, Lemma~\ref{lem:positive-block} gives a contraction $K$ such that $X=P^{1/2}KQ^{1/2}$, and $X+X^*\le S$ is precisely \eqref{eq:coupling}. Thus \eqref{eq:coupling} and \eqref{eq:block} are equivalent.

To prove $(ii)\Rightarrow(i)$, we can therefore use \eqref{eq:block}.
Testing the positive block on \((h,th)\) and adding
\(t\langle(S-X-X^*)h,h\rangle\ge0\) proves \eqref{eq:pencil}
for every \(t\ge0\). 
\end{proof}

\begin{corollary}\label{cor:gram}
Suppose $A,B\in\BH$ and $c\in[-1,1]$ satisfy
\begin{equation}\label{eq:test}
\norm{A-tB}^2\le 1+t^2-2ct\qquad\text{for all }t\ge0.
\end{equation}
There is a contraction $T\in\BH$ with $\Ree T\ge cI$ such that
\begin{equation}\label{eq:gram}
\begin{pmatrix}I&T\\T^*&I\end{pmatrix}
\ge
\begin{pmatrix}A^*A&A^*B\\B^*A&B^*B\end{pmatrix}.
\end{equation}
\end{corollary}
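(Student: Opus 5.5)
The plan is to rewrite \eqref{eq:test} as an operator pencil inequality and then apply Lemma~\ref{lem:completion}. The condition $\norm{A-tB}^2\le 1+t^2-2ct$ is equivalent to $(A-tB)^*(A-tB)\le (1+t^2-2ct)I$ for every $t\ge0$, since for any operator $Y$ we have $\norm{Y}^2=\norm{Y^*Y}$, and $Y^*Y\le \lambda I$ iff $\norm{Y}^2\le\lambda$ (here $\lambda\ge0$ automatically, because $1+t^2-2ct\ge (1-t)^2\ge0$ when $c\le1$). Expanding and collecting powers of $t$, this becomes
\[
(I-A^*A)+t\bigl(A^*B+B^*A-2cI\bigr)+t^2(I-B^*B)\ge0\qquad\text{for all }t\ge0 .
\]
At this point one wants to set $P=I-A^*A$, $Q=I-B^*B$ and $S=A^*B+B^*A-2cI$. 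To apply Lemma~\ref{lem:completion} we need $P,Q\ge0$. Taking $t=0$ in \eqref{eq:test} gives $\norm A\le1$, so $P\ge0$. Dividing the pencil by $t^2$ and letting $t\to\infty$ gives $Q\ge0$.

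Lemma~\ref{lem:completion}, in its block form \eqref{eq:block}, then produces $X\in\BH$ with
\[
\begin{pmatrix}I-A^*A&X\\X^*&I-B^*B\end{pmatrix}\ge0,\qquad X+X^*\le A^*B+B^*A-2cI .
\]
I would define $T:=X+A^*B$. With this choice the first condition is exactly \eqref{eq:gram}, because
\[
\begin{pmatrix}I&T\\T^*&I\end{pmatrix}-\begin{pmatrix}A^*A&A^*B\\B^*A&B^*B\end{pmatrix}=\begin{pmatrix}I-A^*A&X\\X^*&I-B^*B\end{pmatrix}.
\]

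It remains to check the two properties of $T$. For contractivity, \eqref{eq:gram} says the matrix $\begin{pmatrix}I&T\\T^*&I\end{pmatrix}$ dominates a Gram matrix, which is positive, so it is itself positive. Lemma~\ref{lem:positive-block} with $P=Q=I$ then gives $T=K$ for some contraction $K$, so $T$ is a contraction. For the real part, the inequality $X+X^*\le A^*B+B^*A-2cI$ needs the opposite sign. Indeed, $T+T^*=X+X^*+A^*B+B^*A$, and an upper bound on $X+X^*$ yields only $\Ree T\le A^*B+B^*A-cI$, not $\Ree T\ge cI$.

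This sign mismatch is the step I expect to be the main obstacle. I would fix it by reversing the sign of $t$: apply the argument to the pencil in $-B$, that is, replace $X$ by $-X$. Concretely, set $T:=A^*B-X$. Then $\Ree T = \Ree(A^*B)-\tfrac12(X+X^*)\ge \Ree(A^*B)-\Ree(A^*B)+cI=cI$.

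Positivity of \eqref{eq:gram} now requires $\begin{pmatrix}I-A^*A&-X\\-X^*&I-B^*B\end{pmatrix}\ge0$. This follows from the positivity of the block with $X$ by conjugating with $\mathrm{diag}(I,-I)$. So with $T=A^*B-X$ both the contraction property and \eqref{eq:gram} go through as before, and also $\Ree T\ge cI$. The only remaining care is in the bookkeeping of signs and in justifying the limit argument for $Q\ge0$.
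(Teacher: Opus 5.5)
Your proposal is correct and follows essentially the paper's proof. The paper applies Lemma~\ref{lem:completion} to the same pencil with $P=I-A^*A$, $Q=I-B^*B$, $S=A^*B+B^*A-2cI$, and sets $T=A^*B-D_AKD_B$ with $D_A=(I-A^*A)^{1/2}$, $D_B=(I-B^*B)^{1/2}$; this is exactly your final choice $T=A^*B-X$ with $X=D_AKD_B$. The only cosmetic difference is that the paper writes the difference in \eqref{eq:gram} as $\mathrm{diag}(D_A,D_B)\begin{psmallmatrix}I&-K\\-K^*&I\end{psmallmatrix}\mathrm{diag}(D_A,D_B)$ instead of conjugating your block by $\mathrm{diag}(I,-I)$, and it obtains $\norm{T}\le1$ from the positivity of the left-hand block exactly as you do.
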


\begin{proof}
Taking $t=0$ and letting $t\to\infty$ in \eqref{eq:test} shows that $A,B$
are contractions. Set
\[
D_A=(I-A^*A)^{1/2},\qquad D_B=(I-B^*B)^{1/2}.
\]
Using $(A-tB)^*(A-tB)\leq \|A-tB\|^2$ we get from \eqref{eq:test}
\begin{equation}\label{eq:arc-pencil}
D_A^2+t(A^*B+B^*A-2cI)+t^2D_B^2\ge0\qquad\text{for all }t\ge0.
\end{equation}
Apply Lemma~\ref{lem:completion} with
$P=D_A^2$, $Q=D_B^2$, and $S=A^*B+B^*A-2cI$. It gives a contraction $K$
satisfying
\begin{equation}\label{eq:arc-coupling}
D_AKD_B+D_BK^*D_A\le A^*B+B^*A-2cI.
\end{equation}
Put $T=A^*B-D_AKD_B$. Then $\text{Re}T=\frac{T+T^*}{2}\ge cI$, and
\begin{align*}
&\begin{pmatrix}I&T\\T^*&I\end{pmatrix}
-\begin{pmatrix}A^*A&A^*B\\B^*A&B^*B\end{pmatrix}\\
&\qquad=
\begin{pmatrix}D_A&0\\0&D_B\end{pmatrix}
\begin{pmatrix}I&-K\\-K^*&I\end{pmatrix}
\begin{pmatrix}D_A&0\\0&D_B\end{pmatrix}\ge 0,
\end{align*}
as $\begin{psmallmatrix}I&-K\\-K^*&I\end{psmallmatrix}\ge0$ since $K$ is a contraction. Moreover, the block operator on the right of \eqref{eq:gram} is positive implying $T$ is a contraction.
\end{proof}
\subsection{Dilation and complete contractivity on the arc}\label{sec:arc}
\begin{theorem}\label{thm:arc}
Let $0<\theta<\pi$, let $H$ be a complex Hilbert space, and let
$u:E_\theta\to\mathcal B(H)$ be linear. Write $A=u(1)$ and $B=u(z)$.
The following are equivalent:
\begin{enumerate}
\item $u$ is contractive.
\item $\norm{A-tB}^2\le1+t^2-2(\cos\theta)t$ for every $t\ge0$.
\item There are a Hilbert space $L$, an isometry $V:H\to L$,
a contraction $R:L\to H$, and a unitary $U\in\mathcal B(L)$
with $\sigma(U)\subset K_\theta$ such that
\begin{equation}\label{eq:factor}
u(a+bz)=R(aI_{L}+bU)V\qquad\text{for all }a,b\in\mathbb C.
\end{equation}
\end{enumerate}
\end{theorem}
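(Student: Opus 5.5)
The plan is to prove $(1)\Rightarrow(2)\Rightarrow(3)\Rightarrow(1)$.

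\emph{$(1)\Rightarrow(2)$.} For $t\ge0$ we have $\norm{1-tz}_{C(K_\theta)}^2=\max_{|s|\le\theta}(1+t^2-2t\cos s)=1+t^2-2ct$. Hence contractivity of $u$ gives $\norm{A-tB}^2\le 1+t^2-2ct$.

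\emph{$(3)\Rightarrow(1)$.} Since $U$ is normal with $\sigma(U)\subset K_\theta$, the spectral theorem gives $\norm{aI_L+bU}=\sup_{\zeta\in\sigma(U)}|a+b\zeta|\le\norm{a+bz}_{C(K_\theta)}$. As $\norm{R}\le1$ and $\norm{V}\le1$, $u$ is contractive.

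\emph{$(2)\Rightarrow(3)$.} Corollary~\ref{cor:gram} gives a contraction $T$ with $\Ree T\ge cI$ and the Gram inequality \eqref{eq:gram}. The key intermediate goal is a Hilbert space $L$, an isometry $V:H\to L$ and a unitary $U$ on $L$ with $\sigma(U)\subset K_\theta$ and $V^*UV=T$. Granting this, put $G=\begin{pmatrix}V&UV\end{pmatrix}:H\oplus H\to L$. Then
\[
G^*G=\begin{pmatrix}I&T\\T^*&I\end{pmatrix}\ge\begin{pmatrix}A&B\end{pmatrix}^*\begin{pmatrix}A&B\end{pmatrix}.
\]
By Douglas' factorization lemma there is a contraction $R_0:\overline{\operatorname{ran}G}\to H$ with $\begin{pmatrix}A&B\end{pmatrix}=R_0G$. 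Extending $R_0$ by $0$ on $(\operatorname{ran}G)^\perp$ gives a contraction $R:L\to H$ with $A=RV$ and $B=RUV$, which is exactly \eqref{eq:factor}.

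\emph{The dilation step, which I expect to be the main obstacle.} I would obtain $U$ from Arveson's extension theorem and Stinespring's theorem. Let $\mathcal S=\operatorname{span}\{1,z,\bar z\}\subset C(K_\theta)$ and define $\phi:\mathcal S\to\BH$ by $\phi(a+bz+d\bar z)=aI+bT+dT^*$. If $\phi$ is completely positive, it extends to a unital completely positive map on $C(K_\theta)$. Its Stinespring dilation $\pi(f)=$ compression $V^*\pi(f)V$ then yields $U=\pi(z)$, which is unitary with $\sigma(U)\subset K_\theta$ and satisfies $V^*UV=T$.

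So one must show the following. If $F(\zeta)=C_0+C_1\zeta+C_1^*\bar\zeta$, with $C_0=C_0^*$ and $C_1\in M_n$, is $\ge0$ on $K_\theta$, then $C_0\otimes I+C_1\otimes T+C_1^*\otimes T^*\ge0$. The plan is to produce a positivity certificate on the arc of the form
\[
F(\zeta)=\sum_i (X_i+Z_i\zeta)^*(X_i+Z_i\zeta)+(\Ree\zeta-c)\,Y,\qquad Y\ge0,
\]
valid for $\zeta\in K_\theta$ (so using $\bar\zeta\zeta=1$). I would derive it from Proposition~\ref{prop:operator Riesz type}, or from its complex form via realification as in Lemma~\ref{lem:completion}, by the rational change of variables $t=\tan^2$-type, a Cayley map sending the arc $K_\theta$ onto $[0,\infty]$. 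Some bookkeeping of degrees should force the $U_i$ to be linear and the $V_j$ constant, exactly as in the proof of Lemma~\ref{lem:completion}.

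Substituting $T$ for $\zeta$ in the certificate gives
\[
C_0\otimes I+C_1\otimes T+C_1^*\otimes T^*=\sum_i(X_i\otimes I+Z_i\otimes T)^*(\cdots)+\sum_i Z_i^*Z_i\otimes(I-T^*T)+Y\otimes(\Ree T-cI).
\]
Every term on the right is positive, because $\norm{T}\le1$ and $\Ree T\ge cI$. This proves complete positivity. If the change of variables proves awkward, a fallback is to do the whole step directly: apply Lemma~\ref{lem:completion} to the matrix pencil obtained by evaluating $F$ along $\zeta=(1+it\cdot\mathrm{const})/(\ldots)$.
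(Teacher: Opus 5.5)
Your proof is correct. It agrees with the paper on $(1)\Rightarrow(2)$, on $(3)\Rightarrow(1)$, on the use of Corollary~\ref{cor:gram}, and on the Douglas factorization. The only difference is how you produce the dilation $T=V^*UV$.

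The paper gets this step in one line by citing the constrained unitary dilation theorem of Choi--Li. That theorem directly supplies a unitary $U$ on $H\oplus H$ with $\Ree U\ge cI$, and hence $\sigma(U)\subset K_\theta$. You instead show that $a+bz+d\bar z\mapsto aI+bT+dT^*$ is completely positive on $\operatorname{span}\{1,z,\bar z\}\subset C(K_\theta)$, and then apply Arveson extension and Stinespring.

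Your arc certificate is true and follows from your fallback. Your ``$\tan^2$-type'' description of the substitution is not quite right; the map that works is the M\"obius map $t=-e^{i\theta}(\zeta-e^{-i\theta})/(\zeta-e^{i\theta})$. It sends $K_\theta$ onto $[0,\infty]$ and satisfies $t\,|\zeta-e^{i\theta}|^2=2(\Ree\zeta-c)$ on $\mathbb T$. With this map, $|t+e^{i\theta}|^2F(\zeta(t))$ is a pencil $P+tS+t^2Q$ with $P=F(e^{-i\theta})\ge0$ and $Q=F(e^{i\theta})\ge0$. Lemma~\ref{lem:completion} is already proved for complex spaces, so no extra realification is needed. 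It, together with a Gram factorization of the positive block $\begin{pmatrix}P&X\\X^*&Q\end{pmatrix}$, gives exactly your certificate: a Hermitian square of a linear pencil plus $(\Ree\zeta-c)Y$ with $Y\ge0$.

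Your coefficient comparison is legitimate because $1,z,\bar z$ are linearly independent on $K_\theta$. It correctly produces the extra term $\sum_i Z_i^*Z_i\otimes(I-T^*T)$.

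The trade-off is as follows. The paper's route is shorter and gives the concrete dilation space $L=H\oplus H$, but rests on an external theorem. Yours is self-contained given the paper's own Lemma~\ref{lem:completion} and standard dilation theory. Its cost is a possibly much larger Stinespring space $L$, which is harmless because $(3)$ allows any $L$.
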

\begin{proof}
Put $c=\cos\theta$. Since the minimum of $\Ree\zeta$ on $K_\theta$ is $c$,
\begin{equation}\label{eq:scalar-test}
\norm{1-tz}_{C(K_\theta)}^2
=\max_{\zeta\in K_\theta}(1+t^2-2t\Ree\zeta)
=1+t^2-2ct\qquad\text{for all}\ t\ge0.
\end{equation}
Thus $(1)\Rightarrow(2)$.

Assume $(2)$. Corollary~\ref{cor:gram} gives a contraction $T$ satisfying
\eqref{eq:gram} and $\Ree T\ge cI$. By Choi--Li
\cite[Theorem 2.1, p.~437]{CL}, there are an isometry $V:H\to H\oplus H$ and
a unitary $U\in\mathcal B(H\oplus H)$ such that
\[
T=V^*UV,\qquad \Ree U\ge cI.
\]
 Since $U$ is unitary,
\begin{equation}\label{eq:spectrum}
\sigma(U)\subset\{\zeta\in\mathbb T:\Ree\zeta\ge c\}=K_\theta.
\end{equation}
Define the block operators
\[
W=[\,V\ \ UV\,]:H\oplus H\to H\oplus H,\qquad
F=[\,A\ \ B\,]:H\oplus H\to H.
\]
Then we have
\[
W^*W=\begin{pmatrix}I&T\\T^*&I\end{pmatrix},\qquad
F^*F=\begin{pmatrix}A^*A&A^*B\\B^*A&B^*B\end{pmatrix},
\]
so \eqref{eq:gram} says $F^*F\le W^*W$.
Douglas's theorem \cite{Douglas}
gives $F=RW$ for a contraction $R:H\oplus H\to H$.
Thus $A=RV$ and $B=RUV$, proving $(3)$.

Assume $(3)$. We obtain by (3)
\begin{align}
\|u(a+bz)\|=\|R(aI+bU)V\|\leq \|aI+bU\|\leq\sup_{\zeta\in K_\theta}|a+b\zeta|=\|(a+zb)\|_{E_\theta}.
\label{eq:matrix}
\end{align}
In above we use functional calculus of normal operators. This shows $u$ is a contraction.
\end{proof}
\begin{proof}[Proof of Theorem~\ref{thm:unique-oss}]
Let $H$ be a complex Hilbert space and let
$u:E_\theta\to\mathcal B(H)$ be contractive.
The assertion is immediate if $H=\{0\}$, so assume $H\ne\{0\}$.
By Theorem~\ref{thm:arc}, there are a complex Hilbert space $L$,
an isometry $V:H\to L$, a contraction $R:L\to H$, and a unitary
$U\in\mathcal B(L)$ with $\sigma(U)\subseteq K_\theta$ such that
\[
u(a+bz)=R(aI_L+bU)V
\qquad\text{for all }a,b\in\mathbb C.
\]
Fix $n\ge1$. Every element of $M_n(E_\theta)$ has the form $C+Dz$,
where $C=(c_{ij}),D=(d_{ij})\in M_n$ and
$C+Dz=(c_{ij}1+d_{ij}z)_{i,j=1}^n$.
Applying $u$ to each entry gives
\[
u^{(n)}(C+Dz)
=(I_n\otimes R)(C\otimes I_L+D\otimes U)(I_n\otimes V).
\]
Since $R$ and $V$ are contractions,
\[
\|u^{(n)}(C+Dz)\|
\le\|C\otimes I_L+D\otimes U\|.
\]
We now justify the estimate for the operator on the right.
For every $\varepsilon>0$, the spectral theorem gives finitely many
mutually orthogonal spectral projections $P_1,\ldots,P_m$ with
$\sum_{j=1}^mP_j=I_L$, and points
$\zeta_1,\ldots,\zeta_m\in\sigma(U)$, such that
\[
U_\varepsilon=\sum_{j=1}^m\zeta_jP_j,
\qquad \|U-U_\varepsilon\|\le\varepsilon.
\]
The operator
\[
C\otimes I_L+D\otimes U_\varepsilon
=\sum_{j=1}^m(C+\zeta_jD)\otimes P_j
\]
satisfies the inequality
\[
\|C\otimes I_L+D\otimes U_\varepsilon\|
\le\max_{1\le j\le m}\|C+\zeta_jD\|
\le\sup_{\zeta\in K_\theta}\|C+\zeta D\|.
\]

Letting $\varepsilon\to0$ we obtain
\[
\|u^{(n)}(C+Dz)\|
\le\sup_{\zeta\in K_\theta}\|C+\zeta D\|
=\|C+Dz\|_{M_n(E_\theta)}.
\]
Thus $u^{(n)}$ is contractive for every $n\ge1$, so $u$ is
completely contractive. The operator space structure inherited from
$C(K_\theta)$ is minimal. Since every contractive map from this
space into $\mathcal B(H)$ is completely contractive,
\eqref{eq:max-matrix-norm} shows that its maximal matrix norms are
at most its minimal matrix norms. This completes the proof.
\end{proof}

\section{\texorpdfstring{$E_\theta$ is not isometric to $\ell_\infty^2$ or $\ell_1^2$}{E(theta) is not isometric to ell-infinity(2) or ell-1(2)}}\label{sec:nonisometry}

\begin{lemma}
\label{lem:real-restriction}
Let $X$ be a two-dimensional complex Banach space, and let $x,y\in X$
be complex-linearly independent. Suppose that
\[
\|x+y\|=\|x\|+\|y\|.
\]
If $X$ is complex-linearly isometrically isomorphic to
$\ell_1^2(\mathbb C)$, then $F=\operatorname{span}_{\mathbb R}\{x,y\},$
with norm induced from $X$, is real-linearly isometrically isomorphic to
$\ell_1^2(\mathbb R)$.
\end{lemma}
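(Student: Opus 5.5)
Since the hypothesis and the conclusion are both invariant under complex-linear isometries, I may assume $X=\ell_1^2(\mathbb C)$ itself. Write $x=(x_1,x_2)$ and $y=(y_1,y_2)$. The plan is to read off from the equality $\|x+y\|=\|x\|+\|y\|$ enough alignment of coordinates that the real span becomes a real $\ell_1^2$.

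The first step analyses the equality. We have
\[
\|x+y\|=|x_1+y_1|+|x_2+y_2|\le |x_1|+|y_1|+|x_2|+|y_2|=\|x\|+\|y\|,
\]
so equality forces $|x_k+y_k|=|x_k|+|y_k|$ for $k=1,2$. Equality in the scalar triangle inequality means that, whenever $x_k$ and $y_k$ are both nonzero, they have the same argument. Hence for each $k$ there is a unimodular $\omega_k$ with $x_k=\omega_k a_k$ and $y_k=\omega_k b_k$, where $a_k,b_k\ge0$. If one of $x_k,y_k$ vanishes, choose $\omega_k$ to be the phase of the other; if both vanish, choose $\omega_k=1$.

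The second step computes the norm on the real span. For real $s,t$,
\[
\|sx+ty\|=\sum_{k=1}^2|\omega_k(sa_k+tb_k)|=|sa_1+tb_1|+|sa_2+tb_2|.
\]
The map $\Phi:F\to\ell_1^2(\mathbb R)$, $sx+ty\mapsto(sa_1+tb_1,\,sa_2+tb_2)$, is therefore real-linear and isometric. It is well defined, because $x,y$ are complex-linearly (hence real-linearly) independent, so $s,t$ are determined by the vector.

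The third step, and the only real obstacle, is surjectivity of $\Phi$. This amounts to $\det\begin{psmallmatrix}a_1&b_1\\a_2&b_2\end{psmallmatrix}\ne0$. Suppose instead that $(a_1,a_2)$ and $(b_1,b_2)$ are real-linearly dependent. Both are nonzero, since $x,y\neq0$, so $(b_1,b_2)=\lambda(a_1,a_2)$ for some real $\lambda$. Multiplying coordinatewise by $\omega_k$ then gives $y=\lambda x$, contradicting complex independence. So the $2\times2$ real matrix is invertible, $\Phi$ is onto, and $F$ is real-linearly isometric to $\ell_1^2(\mathbb R)$.
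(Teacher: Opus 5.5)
Your proposal is correct and takes essentially the same route as the paper: equality in the $\ell_1$ triangle inequality forces coordinatewise phase alignment $x_k=\omega_k a_k$, $y_k=\omega_k b_k$ with $a_k,b_k\ge0$, and removing the phases (the paper's diagonal isometry $D$) maps the real span $F$ isometrically into $\ell_1^2(\mathbb R)$. Your determinant argument for surjectivity is valid, but the step is not really an obstacle: $\Phi$ is an isometry, hence injective, from the two-dimensional real space $F$ into $\mathbb R^2$, so it is onto by a dimension count, and this is what the paper's ``clearly'' covers.
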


\begin{proof}
Let $T:X\to\ell_1^2(\mathbb C)$ be a complex-linear isometric
isomorphism, and write
\[
u=Tx=(u_1,u_2),\qquad v=Ty=(v_1,v_2).
\]
Since $\{x,y\}$ forms a complex basis of $X$, the vectors $\{u,v\}$ forms a
complex basis of $\mathbb C^2$. In particular,
$(u_j,v_j)\ne(0,0)$ for each $j$. Now the hypothesis gives
\[
\sum_{j=1}^2 |u_j+v_j|
=\|u+v\|_1
=\|u\|_1+\|v\|_1
=\sum_{j=1}^2 (|u_j|+|v_j|).
\]
Then there are $\omega_j\in\mathbb T$
and $r_j,s_j\ge0$ such that $u_j=r_j\omega_j,$ and  $v_j=s_j\omega_j.$

Define $D(w_1,w_2)
=(\overline{\omega_1}w_1,\overline{\omega_2}w_2).$
This is a complex-linear isometry of $\ell_1^2(\mathbb C)$.
Moreover,
for every $a,b\in\mathbb R$, $DT(ax+by) \in\mathbb R^2.$
Hence $DT$ maps $F$ into $\mathbb R^2$. Clearly, $(DT)|_F$ is a real-linear isometric isomorphism from $F$
onto $\ell_1^2(\mathbb R)$.
\end{proof}

\begin{corollary}\label{cor:not-ell1}
Let $0<\theta<\pi$. The real two-dimensional Banach space
\[
F_\theta=\operatorname{span}_{\mathbb R}\{1,z\}\subset E_\theta
\]
is not real-linearly isometrically isomorphic to
$\ell_1^2(\mathbb R)$. Consequently, $E_\theta$ is not
complex-linearly isometrically isomorphic to $\ell_1^2(\mathbb C)$.
\end{corollary}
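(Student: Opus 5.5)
The plan is to combine Lemma~\ref{lem:real-restriction} with an extreme-point count on the real unit ball of $F_\theta$. For the second assertion I take $x=1$ and $y=z$ in $X=E_\theta$. These are complex-linearly independent. Moreover $\|1\|=\|z\|=1$, and evaluating at $\zeta=1\in K_\theta$ gives $\|1+z\|_{C(K_\theta)}=2=\|1\|+\|z\|$. So Lemma~\ref{lem:real-restriction} applies: if $E_\theta$ were complex-linearly isometric to $\ell_1^2(\mathbb C)$, then $F_\theta$ would be real-linearly isometric to $\ell_1^2(\mathbb R)$. Hence it suffices to prove the first assertion.

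First I compute the norm on $F_\theta$ explicitly. For $a,b\in\mathbb R$,
\[
|a+be^{is}|^2=a^2+b^2+2ab\cos s .
\]
Maximizing over $|s|\le\theta$ gives two cases. If $ab\ge0$, the maximum is at $s=0$ and $\|a+bz\|=|a+b|$. If $ab<0$, the maximum is at $\cos s=c=\cos\theta$ and $\|a+bz\|^2=a^2+b^2+2abc$. So in the coordinates $(a,b)$, the closed unit ball $\mathcal U$ of $F_\theta$ consists of two parts. In the quadrants $ab\ge0$ it is bounded by the segments $|a+b|=1$. In the quadrants $ab<0$ it is bounded by arcs of the curve $\mathcal E=\{a^2+b^2+2abc=1\}$. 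Since $|c|<1$, the form $a^2+b^2+2abc$ is positive definite, so $\mathcal E$ is a genuine ellipse and the region $\{a^2+b^2+2abc\le1\}$ is strictly convex.

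Next I show that $\mathcal U$ has infinitely many extreme points. The key observation is that $\mathcal U\subseteq\{a^2+b^2+2abc\le1\}$. Where $ab<0$ this holds by definition. Where $ab\ge0$ and $|a+b|\le1$, we have $a^2+b^2+2abc\le(a+b)^2\le1$ because $c\le1$ and $ab\ge0$. Every point of $\mathcal E$ with $ab<0$ lies in $\mathcal U$ and is an extreme point of the strictly convex elliptical region. It is therefore an extreme point of the smaller convex set $\mathcal U$. There are infinitely many such points, for instance all $(a,b)\in\mathcal E$ with $a>0>b$.

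Finally, a real-linear isometric isomorphism $F_\theta\to\ell_1^2(\mathbb R)$ would map $\mathcal U$ affinely onto the unit ball of $\ell_1^2(\mathbb R)$, and affine bijections preserve extreme points. But that ball is a square with exactly four extreme points $(\pm1,0),(0,\pm1)$, a contradiction.

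The only step needing care is the containment of $\mathcal U$ in the elliptical region, which is what makes the curved arcs consist of extreme points of $\mathcal U$. I expect it to be the main (mild) obstacle, and it reduces to the elementary inequality used above.
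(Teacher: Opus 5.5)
Your proof is correct. Your reduction of the complex statement to the real one, via Lemma~\ref{lem:real-restriction} with $x=1$, $y=z$ and $\|1+z\|=2$, is exactly the paper's.

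For the real statement you take a different route. The paper never computes the full norm on $F_\theta$. It restricts to the single ray $t\mapsto 1-tz$, whose norm $q_\theta(t)=\sqrt{1+t^2-2t\cos\theta}$ is already available from \eqref{eq:scalar-test}, and notes that $q_\theta''>0$. Under any real isometry onto $\ell_1^2(\mathbb R)$ this function would become $|r_1-ts_1|+|r_2-ts_2|$, which is piecewise affine, a contradiction. You instead determine the whole unit ball, with two flat edges where $ab\ge0$ and two elliptical arcs where $ab<0$. You then use the linear invariant ``number of extreme points''.

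Both arguments detect the same curvature. The paper's ray $1-tz$ (i.e.\ $a=1$, $b=-t$) runs through precisely the quadrant $ab<0$ where your elliptical arc lives. The paper's version is shorter, since it needs neither the case analysis for $ab\ge0$ nor the containment of $\mathcal U$ in the elliptical region. Yours gives an explicit picture of the unit ball and an invariant that is transparently preserved by linear isometries.

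Incidentally, your containment step needs no case split. For all real $a,b$ we have $\|a+bz\|\ge|a+be^{i\theta}|=(a^2+b^2+2abc)^{1/2}$, simply because $e^{i\theta}\in K_\theta$.
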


\begin{proof}
Define for all $t>0$,  $q_\theta(t):=\|1-tz\|
=\sqrt{1+t^2-2t\cos\theta}.$
Then
\[
\begin{aligned}
q_\theta''(t)=\frac{\sin^2\theta}
        {(1+t^2-2t\cos\theta)^{3/2}}
>0.
\end{aligned}
\]

Suppose that
$V:F_\theta\to\ell_1^2(\mathbb R)$ were a real-linear isometric
isomorphism. Write
\[
V1=(r_1,r_2),\qquad Vz=(s_1,s_2).
\]
Then
\[
q_\theta(t)
=\|V(1-tz)\|_1
=|r_1-ts_1|+|r_2-ts_2|,
\qquad\text{for all }t>0.
\]
The right-hand side is piecewise affine. This contradicts
$q_\theta''(t)>0$ for all $t>0.$

Finally, $1\in K_\theta$, so
\[
\|1+z\|=2=\|1\|+\|z\|.
\]
Since $1,z$ are complex-linearly independent,
Lemma~\ref{lem:real-restriction} shows that a complex-linear
isometric isomorphism $E_\theta\to\ell_1^2(\mathbb C)$ would
induce a real-linear isometric isomorphism
$F_\theta\to\ell_1^2(\mathbb R)$, which is not possible.
\end{proof}
\begin{prop}\label{prop:not-ellinfty}
Let $0<\theta<\pi$, and set
\[
K_\theta=\{e^{is}:-\theta\le s\le\theta\},
\qquad
E_\theta=\operatorname{span}_{\mathbb C}\{1,z\}
\subset C(K_\theta).
\]
Then $E_\theta$ is not complex-linearly isometric to
$\ell_\infty^2$.
\end{prop}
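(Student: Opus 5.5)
We argue by contradiction, using the geometry of the unit ball through its extreme points of the dual ball, or more simply via a distinguishing metric invariant. The cleanest route is to mirror Corollary~\ref{cor:not-ell1}: exploit the strict convexity of $t\mapsto\|1-tz\|$ along a real line. For $\ell_\infty^2(\mathbb C)$, every function $t\mapsto\|x+ty\|_\infty=\max_j|x_j+ty_j|$ with $t$ real is a maximum of two functions of the form $\sqrt{\alpha_j t^2+\beta_j t+\gamma_j}$. So piecewise-affineness is not available, and a different invariant is needed.

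The plan is to compare the sets of extreme points of the dual unit balls. In $(\ell_\infty^2)^*=\ell_1^2$, the extreme points of the closed unit ball are $\{\omega e_1,\omega e_2:\omega\in\mathbb T\}$. This set is a union of two disjoint circles, so modulo the circle action $f\mapsto\omega f$ there are exactly two orbits. For $E_\theta\subset C(K_\theta)$, the dual ball is the image under restriction of the unit ball of $M(K_\theta)$. Its extreme points are contained in $\{\omega\delta_\zeta|_{E_\theta}:\omega\in\mathbb T,\ \zeta\in K_\theta\}$.

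We then show that every such $\omega\delta_\zeta$ is in fact extreme, since each $\zeta\in K_\theta$ is a peak point for $E_\theta$. Concretely, $g_\zeta(\xi)=1+\bar\zeta\xi\in E_\theta$ satisfies $|g_\zeta(\xi)|=|1+\bar\zeta\xi|$. This quantity is maximized on $\mathbb T$ only at $\xi=\zeta$, where it equals $2=\|g_\zeta\|_{C(K_\theta)}$, so $\delta_\zeta$ is the unique norming functional of $g_\zeta$ up to the standard argument. Hence $\delta_\zeta|_{E_\theta}$ is exposed, and in particular extreme. The evaluations at distinct $\zeta$ are also pairwise non-proportional on $E_\theta$, because $\delta_\zeta(z)=\zeta$ distinguishes them and $\delta_\zeta(1)=1$. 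So the extreme points modulo $\mathbb T$ form a continuum, homeomorphic to the arc $K_\theta$, whereas for $\ell_\infty^2$ they form two points.

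A complex-linear isometry $T:E_\theta\to\ell_\infty^2$ would induce, through its adjoint, an isometry of the dual spaces. That dual isometry maps the extreme points of one dual ball bijectively onto those of the other and commutes with multiplication by $\mathbb T$, which gives the contradiction. The main step needing care is that the peak-point argument really yields extremality, i.e.\ uniqueness of the norming functional of $g_\zeta$ in $(E_\theta^*)_1$. I would prove it as follows. If $\phi\in(E_\theta^*)_1$ satisfies $\phi(g_\zeta)=2$, extend $\phi$ by Hahn--Banach to a measure $\mu$ of total variation at most $1$. The condition $\int g_\zeta\,d\mu=2$ with $|g_\zeta|\le2$ forces $\mu$ to be a positive measure concentrated where $g_\zeta=2$, namely $\mu=\delta_\zeta$. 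An exposed point is extreme, and the argument is complete.
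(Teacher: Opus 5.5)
Your argument is correct, but you reach the contradiction through a different route. The paper uses the same test functions $g_\zeta=1+\overline{\zeta}z$ and works directly in coordinates of $\ell_\infty^2$. With $T1=(u_1,u_2)$ and $Tz=(v_1,v_2)$, the value $\|Tg_\zeta\|_\infty=2$ is attained in some coordinate $j$. The equality case of the triangle inequality in $\mathbb C$ then forces $|u_j|=|v_j|=1$ and $v_j=\zeta u_j$, which in your language says $e_j^*\circ T=u_j\,\delta_\zeta|_{E_\theta}$. Each coordinate can serve at most one $\zeta$, and $K_\theta$ has more than two points, so this is impossible. In effect the paper performs your orbit count by hand: the two extreme orbits $\mathbb T e_1^*,\mathbb T e_2^*$ of $(\ell_\infty^2)^*$ can account for at most two of the evaluation orbits.

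The paper's route buys economy, since it needs only the scalar triangle inequality. Yours invokes Hahn--Banach, the Riesz representation, the equality case $\bigl|\int g_\zeta\,d\mu\bigr|=\int|g_\zeta|\,d|\mu|=2$, and the fact that the adjoint of a surjective isometry preserves extreme points of dual balls (surjectivity is automatic here by dimension). Your route buys conceptual clarity. It names an isometric invariant, the number of $\mathbb T$-orbits of extreme points of the dual ball, and shows that each $\delta_\zeta|_{E_\theta}$ is exposed, which explains why the paper's counting works.

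Your assertion that every extreme point of $(E_\theta^*)_1$ has the form $\omega\delta_\zeta|_{E_\theta}$ is true but is not proved in your sketch. It is also not needed: the contradiction only uses that there are at least three distinct orbits of extreme points.
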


\begin{proof}
Suppose that $T:E_\theta\to\ell_\infty^2$ is a
complex-linear isometry. Write
\[
T1=(u_1,u_2),
\qquad
Tz=(v_1,v_2).
\]
Since $\|1\|_{E_\theta}=\|z\|_{E_\theta}=1$, we have for $j=1,2$
\[
|u_j|\le1,\qquad |v_j|\le1.\]

Note that for every $\zeta\in K_\theta$, $\|1+\overline{\zeta}z\|_{E_\theta}=2.$
Indeed, $|1+\overline{\zeta}\eta|\le2$ for every
$\eta\in K_\theta$, with equality at $\eta=\zeta$.
By linearity and isometry,
\[
\max_{j=1,2}|u_j+\overline{\zeta}v_j|=2.
\]
Hence, for at least one coordinate $j$,
\[
2=|u_j+\overline{\zeta}v_j|
\le |u_j|+|v_j|\le2.
\]
Equality throughout implies that $|u_j|=|v_j|=1$
and that $u_j$ and $\overline{\zeta}v_j$ have the same
argument. Since both have modulus one, they are equal.
Thus we obtain $v_j=\zeta u_j.$ For a fixed coordinate $j$,
this identity determines at most one value of $\zeta$, since
$|u_j|=1$. The two coordinates can therefore account for at most
two points of $K_\theta$, whereas $K_\theta$ contains more than
two points. This is a contradiction.
\end{proof}
\begin{prop}\label{prop:pairwise-nonisometric}
Let $0<\theta,\varphi<\pi$. Then $E_\theta$ and $E_\varphi$
are complex-linearly isometrically isomorphic if and only if
$\theta=\varphi$.
\end{prop}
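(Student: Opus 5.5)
The plan is to recover the arc $K_\theta$, up to rotation and reflection, from the Banach space structure of $E_\theta$ by looking at extreme points of the dual unit ball. The "if" direction is trivial, so assume $T:E_\varphi\to E_\theta$ is a complex-linear surjective isometry. Then $T^*:E_\theta^*\to E_\varphi^*$ is a surjective isometry and maps extreme points of $(E_\theta^*)_1$ bijectively onto extreme points of $(E_\varphi^*)_1$.

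The first step, and the one I expect to be the main obstacle, is to identify these extreme points. I want to show
\[
\operatorname{ext}(E_\theta^*)_1=\{\lambda\delta_\zeta:\ |\lambda|=1,\ \zeta\in K_\theta\},
\]
and that distinct pairs $(\lambda,\zeta)$ give distinct functionals.

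The inclusion "$\subseteq$" follows from Hahn--Banach and Riesz. Every functional of norm at most one is $\int\cdot\,d\mu$ for some measure $\mu$ on $K_\theta$ with $\|\mu\|\le1$. Hence $(E_\theta^*)_1$ is the weak$^*$-closed absolutely convex hull of $\{\delta_\zeta\}$. Since this set is compact, Milman's converse places the extreme points among $\lambda\delta_\zeta$.

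For "$\supseteq$" I would use the peaking function $f_\zeta=1+\overline{\zeta}z$. It has norm $2$ and satisfies $|f_\zeta(\eta)|=2$ only at $\eta=\zeta$. Suppose $\delta_\zeta=\tfrac12(\psi_1+\psi_2)$ with $\|\psi_i\|\le1$. Each $\psi_i$ must norm $f_\zeta$. Representing $\psi_i$ by measures $\mu_i$ with $\|\mu_i\|\le 1$ then forces $\mu_i$ to be the point mass $\delta_\zeta$ (possibly times a constant, which testing against $1$ pins down). Hence $\psi_i=\delta_\zeta$ on $E_\theta$, so $\delta_\zeta$ is extreme. Distinctness is clear by evaluating at $1$ and at $z$.

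Next, write $T1=a+bz$ and $Tz=c+dz$. For $\eta\in K_\varphi$ there are $|\lambda|=1$ and $\sigma(\eta)\in K_\theta$ with $(T^*)^{-1}\delta_\eta=\lambda\delta_{\sigma(\eta)}$. Applying this to the bijection of extreme points, the identity $\delta_\eta\circ T^{-1}$ unwinds (using $T$ rather than its inverse) to the relations
\[
|a+b\eta|=1,\qquad |c+d\eta|=1\qquad\text{for all }\eta\in K_\theta,
\]
together with a M\"obius map $\sigma(\eta)=(c+d\eta)/(a+b\eta)$ from $K_\theta$ onto $K_\varphi$.

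The relation $|a+b\eta|^2=1$ gives $|a|^2+|b|^2+2\Ree(\overline{a}b\eta)=1$ on a nondegenerate arc. Because $\Ree(w\eta)$ is nonconstant on any arc whenever $w\neq0$, this forces $\overline{a}b=0$, and similarly $\overline{c}d=0$. So each of $T1$ and $Tz$ is a unimodular constant or a unimodular multiple of $z$. Linear independence leaves two cases: $T1=\lambda$, $Tz=\mu z$, giving $\sigma(\eta)=(\mu/\lambda)\eta$; or $T1=\lambda z$, $Tz=\mu$, giving $\sigma(\eta)=\mu\overline{\lambda}\,\overline{\eta}$.

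In either case $\sigma$ is a rotation, possibly composed with conjugation, so it preserves arc length. Since $\sigma$ maps $K_\theta$ onto $K_\varphi$, we get $2\theta=2\varphi$, which proves the claim.
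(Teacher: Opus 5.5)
Your argument is correct, but it obtains the key intermediate facts by a different mechanism than the paper.

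\textbf{Your route.} You identify $\operatorname{ext}(E_\theta^*)_1=\{\lambda\delta_\zeta\}$ using Hahn--Banach, Riesz representation, Milman's converse and the peak functions $1+\overline{\zeta}z$. You then let $T^*$ transport extreme points. This gives at once that $|T1|=|Tz|=1$ on \emph{all} of $K_\theta$, and that $\zeta\mapsto (Tz)(\zeta)/(T1)(\zeta)$ is a bijection of $K_\theta$ onto $K_\varphi$.

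\textbf{The paper's route.} It avoids duality entirely. With $f=T1$ and $g=Tz$, it uses only three facts: $\|f+\overline{\zeta}g\|=2$ for $\zeta\in K_\theta$, compactness, and the equality case of the triangle inequality. This gives $|f|=|g|=1$ merely on some infinite set $S\subseteq K_\varphi$. That is enough, since $|a+b\eta|=1$ has at most two solutions on $\mathbb T$ when $a\overline{b}\neq0$. The paper then has to establish $K_\theta=h(K_\varphi)$ separately, via $\zeta\in K_\theta\iff\|1+\overline{\zeta}z\|_{C(K_\theta)}=2$.

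\textbf{Comparison.} From $\overline{a}b=\overline{c}d=0$ onward the two proofs coincide. Your route is the standard Choquet-boundary (Banach--Stone type) mechanism. It would transfer unchanged to other function spaces in which every point is a peak point, and it delivers the bijection $K_\theta\to K_\varphi$ for free. The paper's route is more elementary and self-contained.

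\textbf{A presentational fix.} The map you first call $\sigma$ is defined through $(T^*)^{-1}$ and goes from $K_\varphi$ to $K_\theta$. It is the inverse of the M\"obius map you later also call $\sigma$. It is cleaner to write $\delta_\zeta\circ T=\lambda\,\delta_{\tau(\zeta)}$ for $\zeta\in K_\theta$ and evaluate at $1$ and at $z$. This gives $a+b\zeta=\lambda$ and $c+d\zeta=\lambda\tau(\zeta)$, and surjectivity of $\tau$ comes from bijectivity of $T^*$ on extreme points.
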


\begin{proof}
Suppose that $T:E_\theta\to E_\varphi$ is a complex-linear
isometric isomorphism. Write
\[
f=T1=a+bz,\qquad g=Tz=c+dz.
\]
These functions, and hence their coefficients, are fixed
throughout the argument. Since $T$ is an isometry,
\[
\|f\|_{C(K_\varphi)}=\|g\|_{C(K_\varphi)}=1.
\]

For every $\zeta\in K_\theta$,
\[
\|f+\overline{\zeta}g\|_{C(K_\varphi)}
=\|1+\overline{\zeta}z\|_{C(K_\theta)}=2.
\]
By compactness, there is $\eta\in K_\varphi$ such that
\[
2=|f(\eta)+\overline{\zeta}g(\eta)|
\le |f(\eta)|+|g(\eta)|\le2.
\]
Since each of $|f(\eta)|$ and $|g(\eta)|$ is at most one,
equality implies
\[
|f(\eta)|=|g(\eta)|=1.
\]
Equality in the triangle inequality also gives
\[
g(\eta)=\zeta f(\eta).
\]

For these fixed functions, put
\[
S=\{\eta\in K_\varphi:|f(\eta)|=|g(\eta)|=1\}.
\]
The preceding argument shows that
\[
K_\theta\subseteq
\left\{\frac{g(\eta)}{f(\eta)}:\eta\in S\right\}.
\]
The quotients are well defined because $|f(\eta)|=1$ on $S$.
If $S$ were finite, the set on the right would be finite,
which is impossible since $K_\theta$ is infinite.
Thus $S$ is infinite.

By definition of $S$, every $\eta\in S$ satisfies
$|f(\eta)|=|g(\eta)|=1$. Since $z(\eta)=\eta$, we have
\[
f(\eta)=a+b\eta,\qquad g(\eta)=c+d\eta.
\]
Consequently,
\[
|a+b\eta|=|c+d\eta|=1
\qquad\text{for all }\eta\in S.
\]

For $\eta\in\mathbb T$,
\[
|a+b\eta|^2
=|a|^2+|b|^2
+2\operatorname{Re}(a\overline{b}\,\overline{\eta}).
\]
If $a\overline{b}\ne0$, the equation $|a+b\eta|=1$ has at most two solutions. Since this
equation holds on the infinite set $S$, we obtain
$a\overline{b}=0$. Similarly, $c\overline{d}=0$.

It follows that
\[
a=0\ \text{or}\ b=0,
\qquad
c=0\ \text{or}\ d=0.
\]
Thus each of $f,g$ is either a constant or a multiple of $z$.
Neither function is zero, since both have norm one.
Moreover, $f,g$ are linearly independent because $T$
is injective and $1,z$ are linearly independent.
They therefore cannot both be constants or both be
multiples of $z$.

If $b=0$, then $f=a$ is a nonzero constant, and independence
forces $g=dz$, so $c=0$. The norm-one condition gives
$|a|=|d|=1$.
If $a=0$, then $f=bz$, and independence forces $g=c$
to be a nonzero constant, so $d=0$. In this case
$|b|=|c|=1$. Hence the map
\[
h(\eta):=\frac{g(\eta)}{f(\eta)},\qquad\text{for all }\eta\in\mathbb T,
\]
has one of the forms
\[
h(\eta)=\frac{\mu}{\lambda}\eta
\qquad\text{or}\qquad
h(\eta)=\frac{\mu}{\lambda}\overline{\eta},
\] for some $\mu,\lambda\in\mathbb T$. Now for $\zeta,\eta\in\mathbb T$, we have
\[
|1+\overline{\zeta}\eta|=2
\quad\Longleftrightarrow\quad \eta=\zeta.
\]
Similarly, since $|f(\eta)|=|g(\eta)|=1$,
\[
|f(\eta)+\overline{\zeta}g(\eta)|=2
\quad\Longleftrightarrow\quad h(\eta)=\zeta.
\]
Using compactness and the isometry of $T$, we therefore obtain,
for every $\zeta\in\mathbb T$,
\[
\begin{aligned}
\zeta\in K_\theta
&\Longleftrightarrow
\|1+\overline{\zeta}z\|_{C(K_\theta)}=2\\
&\Longleftrightarrow
\|f+\overline{\zeta}g\|_{C(K_\varphi)}=2\\
&\Longleftrightarrow
\zeta\in h(K_\varphi).
\end{aligned}
\]
Thus $K_\theta=h(K_\varphi)$. Since $h$ is a rotation
or a rotation followed by conjugation, it preserves arc
length. Therefore $2\theta=2\varphi$, so $\theta=\varphi$.
\end{proof}

\appendix
\section{Complexification of a real Hilbert space}\label{app:complexification}
 The complexification of a real Hilbert space $(H,\ \langle .,.\rangle_{H})$ is defined to be the vector space
\[
   \HC:=H+iH=\{x+iy:x,y\in H\},
\]
where the decomposition $x+iy$ is unique. Addition is defined componentwise,
and, for $a,b\in\mathbb R$, complex scalar multiplication is given by
\[
 (a+ib)(x+iy)=(ax-by)+i(bx+ay).
\]
The canonical complex inner product is
\begin{equation}\label{eq:inner}
\langle x+iy,u+iv\rangle_{\HC}
   =(x,u)_H+(y,v)_H
       +i\bigl[(y,u)_H-(x,v)_H\bigr].
\end{equation}
In particular, symmetry of the original real inner product implies
\[
 \|x+iy\|_{\HC}^2
 =\langle x+iy,x+iy\rangle_{\HC}
 =\|x\|_H^2+\|y\|_H^2.
\]
It follows that $\HC$ is a complex Hilbert space.
The map $x\mapsto x+i0$ identifies $H$ isometrically with a closed real
subspace of $\HC$. The canonical conjugation is
\[
 \kappa:\HC\longrightarrow\HC,\qquad \kappa(x+iy)=x-iy;
 \qquad \kappa^2=I,\qquad \langle\kappa u,\kappa v\rangle=\overline{\langle u,v\rangle}, \ \text{for all} u, v\in H_{\mathbb{C}}.
\]
It is a conjugate-linear isometric involution and its fixed-point space is $H$.

Let $T\in\BH$ be a bounded \emph{real-linear} operator. Its complexification
is defined by
\begin{equation}\label{eq:extension}
 \quad T_{\mathbb C}(x+iy):=Tx+iTy,\qquad\text{for all}\ x,y\in H.\quad
\end{equation}
We write $T\geq0$ when $T=T^*$ and $\langle Tx,x\rangle_H\geq0$ for all $x\in H$.
The following proposition is easy to check.
\begin{prop}\label{prop:complexification}
The operator $T_{\mathbb C}\colon\HC\to\HC$ is the unique complex-linear
extension of $T$. Moreover,
\begin{align}
 \|T_{\mathbb C}\|_{\BHC}&=\|T\|_{\BH},
 \label{eq:norm}\\
 (T_{\mathbb C})^*&=(T^*)_{\mathbb C},
 \label{eq:adjoint}\\
 T_{\mathbb C}\kappa&=\kappa T_{\mathbb C}.
 \label{eq:conjugation}
\end{align}
If $S\in\BH$, then
\begin{equation}\label{eq:product}
 (ST)_{\mathbb C}=S_{\mathbb C}T_{\mathbb C}.
\end{equation}
In particular, $T$ is self-adjoint, normal,
contraction, isometry, orthogonal and positive if and only if $T_{\mathbb{C}}$ is
self-adjoint, normal, contraction, isometry, unitary and positive, respectively.
\end{prop}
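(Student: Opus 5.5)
The statement to prove is Proposition~\ref{prop:complexification}. I will first check that $T_{\mathbb C}$ is well defined and complex-linear, and that it is the unique such extension. Well-definedness is immediate from the uniqueness of the decomposition $x+iy$. For complex-linearity, it suffices, by real-linearity, to check multiplication by $i$. Since $i(x+iy)=-y+ix$, we get $T_{\mathbb C}(i(x+iy))=-Ty+iTx=i(Tx+iTy)$. For uniqueness, any complex-linear extension $\tilde T$ must satisfy $\tilde T(x+iy)=\tilde Tx+i\tilde Ty=Tx+iTy$.

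The product rule \eqref{eq:product} follows directly from the definition: $S_{\mathbb C}T_{\mathbb C}(x+iy)=STx+iSTy$. Commutation with $\kappa$ in \eqref{eq:conjugation} is equally direct: $T_{\mathbb C}\kappa(x+iy)=Tx-iTy=\kappa T_{\mathbb C}(x+iy)$. For the adjoint \eqref{eq:adjoint}, I will expand $\langle T_{\mathbb C}(x+iy),u+iv\rangle$ using \eqref{eq:inner}. This gives
\[
(Tx,u)+(Ty,v)+i\bigl[(Ty,u)-(Tx,v)\bigr].
\]
Moving $T$ across each real inner product as $T^*$ yields exactly $\langle x+iy,(T^*)_{\mathbb C}(u+iv)\rangle$.

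For the norm identity \eqref{eq:norm}, the inequality $\|T_{\mathbb C}\|\ge\|T\|$ holds because $T_{\mathbb C}$ extends $T$ isometrically on $H\subset\HC$. This reverse direction is the only step needing an idea. Writing $\|T_{\mathbb C}(x+iy)\|^2=\|Tx\|^2+\|Ty\|^2\le\|T\|^2(\|x\|^2+\|y\|^2)$ gives $\|T_{\mathbb C}\|\le\|T\|$, using the norm formula following \eqref{eq:inner}.

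For the final list of equivalences, I will use the established identities.

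\textbf{Self-adjoint and normal.} $T=T^*$ if and only if $T_{\mathbb C}=(T^*)_{\mathbb C}=(T_{\mathbb C})^*$, by uniqueness of extension and \eqref{eq:adjoint}. The normal case is the same, combined with \eqref{eq:product}.

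\textbf{Contraction.} This follows from \eqref{eq:norm}.

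\textbf{Isometry, orthogonal and unitary.} $T^*T=I$ if and only if $(T_{\mathbb C})^*T_{\mathbb C}=I_{\mathbb C}=I$. Orthogonal (meaning $T^*T=TT^*=I$) corresponds to unitary by the same argument applied to both products.

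\textbf{Positive.} Suppose $T\ge0$. The real part of $\langle T_{\mathbb C}(x+iy),x+iy\rangle$ is $(Tx,x)+(Ty,y)\ge0$. The cross terms cancel in the real part, and the imaginary part is $(Ty,x)-(Tx,y)=0$ by self-adjointness, so $T_{\mathbb C}\ge0$. Conversely, if $T_{\mathbb C}\ge0$, restricting the quadratic form to $x+i0$ gives $(Tx,x)\ge0$, and self-adjointness transfers as above.

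I expect no real obstacle. The only point of care is the sign bookkeeping in \eqref{eq:inner} when verifying \eqref{eq:adjoint} and the vanishing of the imaginary part in the positivity check.
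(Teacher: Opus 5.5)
Your direct verification is correct and is exactly the routine check the paper intends; the paper gives no proof and just says the proposition is ``easy to check.'' One cosmetic point: in the positivity step the real part $(Tx,x)+(Ty,y)$ contains no cross terms. The cross terms are the purely imaginary ones, $(Ty,x)-(Tx,y)$, and these cancel by self-adjointness, so the phrase ``cross terms cancel in the real part'' should be reworded.
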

\begin{prop}\label{prop:abs}
Let $H$ be a real Hilbert space and $T\in\mathcal B(H)$. Then there is a unique
operator $|T|\in\mathcal B(H)$ with
\[
 |T|\ge0,\qquad |T|^2=T^*T .
\]
Moreover, $|T|_{\mathbb C}=|T_{\mathbb C}|$, where $|T_{\mathbb C}|=(T_{\mathbb C}^*T_{\mathbb C})^{1/2}$
is the usual absolute value of $T_{\mathbb C}.$
\end{prop}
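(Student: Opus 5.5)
Let $T\in\mathcal B(H)$ with $H$ a real Hilbert space. The statement to prove is Proposition~\ref{prop:abs}: existence and uniqueness of a positive square root $|T|$ of $T^*T$ on $H$, together with the identity $|T|_{\mathbb C}=|T_{\mathbb C}|$. The plan is to transfer everything to the complexification $\HC$. There the continuous functional calculus and uniqueness of positive square roots are standard, so the work reduces to showing that the complex square root is the complexification of a real operator.

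\textbf{Step 1 (a real-linear candidate).} By Proposition~\ref{prop:complexification}, $(T^*T)_{\mathbb C}=(T^*)_{\mathbb C}T_{\mathbb C}=T_{\mathbb C}^*T_{\mathbb C}$, and this operator is positive on $\HC$. Let $Q=|T_{\mathbb C}|=(T_{\mathbb C}^*T_{\mathbb C})^{1/2}$ be the usual positive square root. We want $Q$ to commute with the conjugation $\kappa$. Here $\kappa Q\kappa$ is complex-linear, since $\kappa$ is conjugate-linear and the two conjugations cancel. It is positive, because $\langle\kappa Q\kappa u,u\rangle=\overline{\langle Q\kappa u,\kappa u\rangle}\ge0$. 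Its square is $\kappa Q^2\kappa=\kappa T_{\mathbb C}^*T_{\mathbb C}\kappa=T_{\mathbb C}^*T_{\mathbb C}$, using \eqref{eq:conjugation} and $\kappa^2=I$. By uniqueness of positive square roots on complex Hilbert spaces, $\kappa Q\kappa=Q$. Consequently $Q$ maps the fixed-point space $H$ of $\kappa$ into itself. Define $|T|:=Q|_H\in\mathcal B(H)$. Since $Q$ is complex-linear and agrees with $|T|$ on $H$, Proposition~\ref{prop:complexification} gives $Q=|T|_{\mathbb C}$.

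\textbf{Step 2 (properties of $|T|$).} The same proposition transfers properties back to $H$. Positivity of $|T|_{\mathbb C}=Q$ gives $|T|\ge0$. From $(|T|^2)_{\mathbb C}=Q^2=(T^*T)_{\mathbb C}$ and injectivity of $S\mapsto S_{\mathbb C}$, we get $|T|^2=T^*T$. This proves existence, and also the asserted identity $|T|_{\mathbb C}=|T_{\mathbb C}|$.

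\textbf{Step 3 (uniqueness).} Suppose $R\in\mathcal B(H)$ satisfies $R\ge0$ and $R^2=T^*T$. Then $R_{\mathbb C}$ is positive by Proposition~\ref{prop:complexification}, and $R_{\mathbb C}^2=(R^2)_{\mathbb C}=T_{\mathbb C}^*T_{\mathbb C}$. Uniqueness of the complex positive square root gives $R_{\mathbb C}=Q$. Restricting both sides to $H$ yields $R=|T|$.

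The only delicate point is Step 1, namely that $\kappa Q\kappa$ is again a positive complex-linear operator. This rests on the conjugate-symmetry $\langle\kappa u,\kappa v\rangle=\overline{\langle u,v\rangle}$. The rest is bookkeeping with Proposition~\ref{prop:complexification}.
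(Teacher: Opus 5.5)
Your proof is correct and follows essentially the same route as the paper: you show that $\kappa|T_{\mathbb C}|\kappa$ is again a positive square root of $T_{\mathbb C}^*T_{\mathbb C}$, so $|T_{\mathbb C}|$ commutes with $\kappa$ and restricts to $H$, and you then transfer positivity and $|T|^2=T^*T$ back to $H$ via Proposition~\ref{prop:complexification}. Your Step~3 spells out the uniqueness argument that the paper compresses into ``uniqueness follows from the complex case.''
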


\begin{proof}
Put $A=T^*T$. Then by Proposition~\ref{prop:complexification}, $(A_{\mathbb C})^{1/2}=|T_{\mathbb C}|$.

Note that
$\langle\kappa( A_{\mathbb C})^{1/2}\kappa v,v\rangle=\overline{\langle (A_{\mathbb C})^{1/2}\kappa v,\kappa v\rangle}\ge0$. Also,
$\bigl(\kappa(A_{\mathbb C})^{1/2}\kappa\bigr)^2=\kappa A_{\mathbb C}\kappa=A_{\mathbb C}$ by \eqref{eq:conjugation}. By uniqueness of the
positive square root on $H_{\mathbb C}$, we get $\kappa( A_{\mathbb C})^{1/2}\kappa=(A_{\mathbb C})^{1/2}.$

Put $B=(A_{\mathbb C})^{\frac{1}{2}}.$ For $x\in H$, $\kappa Bx=B\kappa x=Bx$, so $Bx\in H$. Thus
$S:=B|_H\in\mathcal B(H)$ and $B=S_{\mathbb C}$. By \eqref{eq:adjoint},
$(S^*)_{\mathbb C}=B^*=B=S_{\mathbb C}$, so $S=S^*$. Also
$S\ge0$ since $S_{\mathbb C}=B\ge0$. By \eqref{eq:product}, $(S^2)_{\mathbb C}=B^2=A_{\mathbb C}$,
so $S^2=A$. Set $|T|:=S$. Hence the square root in the real case exists and the uniqueness follows from the complex case.

Finally, by construction $|T|_{\mathbb C}=S_{\mathbb C}=B=|T_{\mathbb C}|$.
\end{proof}
The same argument, starting with any positive $P\in\mathcal B(H)$ in place of $A=T^*T$, gives its unique positive square root and $(P^{1/2})_{\mathbb C}=(P_{\mathbb C})^{1/2}$.
\section{Realification of a complex Hilbert space}\label{app:realification}
Let $(H,\langle\cdot,\cdot\rangle)$ be a complex Hilbert space. Its
\emph{realification} $H_{\mathbb R}$ is the same set with scalar multiplication
restricted to $\mathbb R$, equipped with the real inner product
\[
 (x,y)_{\mathbb R}:=\operatorname{Re}\langle x,y\rangle .
\]
Then $(x,x)_{\mathbb R}=\|x\|^2$, so $H_{\mathbb R}$ is a real Hilbert space
with the same norm. Let $J:H_{\mathbb R}\to H_{\mathbb R}$, $Jx=ix$. Then
\[
 J^2=-I,\qquad J^*=-J,\qquad J \text{ is orthogonal.}
\]
The complex inner product is recovered by
$\langle x,y\rangle=(x,y)_{\mathbb R}+i\,(x,Jy)_{\mathbb R}$.
Every complex-linear $T\in\mathcal B(H)$ is real-linear on $H_{\mathbb R}$;
write $T_{\mathbb R}$ for this operator.

\begin{prop}\label{prop:realification}
Let $T,S\in\mathcal B(H)$ be complex-linear. Then
\begin{align}
 \|T_{\mathbb R}\|&=\|T\|,\label{eq:rnorm}\\
 (T_{\mathbb R})^*&=(T^*)_{\mathbb R},\label{eq:radjoint}\\
 (ST)_{\mathbb R}&=S_{\mathbb R}T_{\mathbb R},\qquad
 T_{\mathbb R}J=JT_{\mathbb R}.\label{eq:rproduct}
\end{align}
Conversely, a real-linear operator on $H_{\mathbb R}$ is of the form
$T_{\mathbb R}$ for a complex-linear operator $T$ if and only if it commutes
with $J$. In particular, $T$ is self-adjoint, normal, a contraction, an
isometry, unitary or positive if and only if $T_{\mathbb R}$ is
self-adjoint, normal, a contraction, an isometry, orthogonal or positive,
respectively. Moreover, for a real-linear $A$ on $H_{\mathbb R}$, the operator
\[
 \tfrac12\,(A-JAJ)
\]
is complex-linear, and it has norm at most $\|A\|$.
\end{prop}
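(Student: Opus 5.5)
The statement to be proved is Proposition~\ref{prop:realification}. I would verify each item directly from the definitions $(x,y)_{\mathbb R}=\Ree\langle x,y\rangle$ and $Jx=ix$, in the order listed.

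\textbf{Norm, adjoint, product.} Since $H_{\mathbb R}$ is the same set with the same norm, and $T_{\mathbb R}$ is the same map, \eqref{eq:rnorm} is immediate. For \eqref{eq:radjoint}, compute
\[
(T_{\mathbb R}x,y)_{\mathbb R}=\Ree\langle Tx,y\rangle=\Ree\langle x,T^*y\rangle=(x,(T^*)_{\mathbb R}y)_{\mathbb R}.
\]
Uniqueness of the real adjoint then gives \eqref{eq:radjoint}. The product rule in \eqref{eq:rproduct} holds because composition is unchanged. The relation $T_{\mathbb R}J=JT_{\mathbb R}$ is exactly complex linearity at the scalar $i$. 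The identities for $J$ itself follow the same way: $J^2=-I$ is trivial, and $J^*=-J$ comes from $\Ree\langle ix,y\rangle=\Ree\langle x,-iy\rangle$.

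\textbf{Converse characterization.} If a real-linear $A$ satisfies $AJ=JA$, then $A(ix)=iAx$. Combined with real linearity, this gives $A((a+ib)x)=(a+ib)Ax$, so $A$ is complex-linear. Boundedness is unchanged because the norms coincide.

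\textbf{Transfer of properties.} Self-adjointness and normality transfer via \eqref{eq:radjoint} and the product rule. The contraction and isometry properties transfer via equality of norms and $\|x\|$. For unitary versus orthogonal, use $T^*T=I=TT^*$ together with \eqref{eq:radjoint}. Positivity is the one point needing an argument. Assume $T=T^*$; then $\langle Tx,x\rangle$ is real, so $\langle Tx,x\rangle=\Ree\langle Tx,x\rangle=(T_{\mathbb R}x,x)_{\mathbb R}$. Conversely, if $T_{\mathbb R}$ is self-adjoint, then $T$ is self-adjoint by the converse part, and the same identity applies.

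\textbf{The averaging map.} Put $K=\frac12(A-JAJ)$. Using $J^2=-I$,
\[
JK=\tfrac12(JA+AJ)=KJ,
\]
so $K$ is complex-linear by the converse characterization. For the norm, $J$ is orthogonal: it is a real-linear isometry, and $J^{-1}=-J$ shows it is surjective. Hence $\|JAJ\|=\|A\|$, and the triangle inequality gives $\|K\|\le\|A\|$.

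I see no real obstacle here. The only place requiring care is keeping track of which adjoint (real or complex) is meant in the positivity equivalence, and that is resolved by the observation that $\langle Tx,x\rangle$ is real whenever $T$ is self-adjoint.
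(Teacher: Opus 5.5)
Your proposal is correct. You verify each item directly from $(x,y)_{\mathbb R}=\Ree\langle x,y\rangle$ and $Jx=ix$, and you handle the one real subtlety properly: real positivity has to carry self-adjointness as a separate condition, since for example $(Jx,x)_{\mathbb R}=0$ for all $x$ even though $J^*=-J$. The paper states this proposition without proof as a routine check, and your argument is exactly that routine verification.
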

For $P\ge0$, Proposition~\ref{prop:realification} and uniqueness of the positive square root give $(P^{1/2})_{\mathbb R}=(P_{\mathbb R})^{1/2}$. This is the square-root identity used in the complex case of Lemma~\ref{lem:completion}.
\paragraph{Acknowledgement.}
The named author thanks the support of the Prime Minister Early Career Research Grant
ANRF/\allowbreak ECRG/\allowbreak 2024/\allowbreak 000699/\allowbreak PMS and ANRF-MATRICS grant ANRF/\allowbreak ARGM/\allowbreak 2025/\allowbreak 000895/\allowbreak MTR.

\paragraph{Declaration of AI assistance.}
The author used ChatGPT 6.1 Sol (OpenAI) for substantive mathematical
assistance, including proposing circular-arc examples and developing
proof drafts, and for language and LaTeX editing. The author posed
the problem, directed the search using the results in
\cite{ARFJS95} and his earlier work \cite{Ray20}, provided geometric
intuitions, critically examined and revised
the proposed arguments, and developed alternative proofs, selecting
those he considered most natural. The author takes full
responsibility for the final manuscript.

\begin{flushleft}
\small
\textit{The Institute of Mathematical Sciences, 4th Cross Street,
CIT Campus, Tharamani, Chennai, Tamil Nadu 600113, India, and}\\[0.5em]
\textit{Homi Bhabha National Institute, Training School Complex,
Anushakti Nagar, Mumbai 400094, India}\\[0.5em]
\textit{Email address:} \href{mailto:samya@imsc.res.in}{\texttt{samya@imsc.res.in}}
\end{flushleft}
\end{document}